\documentclass[preprint,12pt]{elsarticle}

\usepackage{amscd}
\usepackage{amsmath}
\usepackage{amsthm}
\usepackage{amsfonts}
\usepackage{graphicx}
\usepackage{amssymb}
\usepackage{color}
\usepackage{amsbsy}
\usepackage{graphicx}
\usepackage{amssymb,color,amsbsy}

\usepackage{float}
\usepackage{comment}

\usepackage[ruled]{algorithm2e}

\usepackage[matrix,arrow]{xy}
\usepackage{mathabx}   

\DeclareMathAlphabet{\mathpzc}{OT1}{pzc}{m}{it}

\usepackage{pgfpages}
\usepackage{tikz}
\usetikzlibrary{shapes.geometric}
\usetikzlibrary{decorations.pathmorphing}
\usetikzlibrary{arrows}
\usetikzlibrary{backgrounds}
\usetikzlibrary{positioning}
\usetikzlibrary{fit}
\usepackage{caption}
\usepackage{amsthm}

\usepackage{amsmath}

\usepackage[pagebackref=true, bookmarksopen=true,colorlinks=true, linkcolor=red,citecolor=blue]{hyperref}

\usepackage[capitalise]{cleveref}  

\global\long\def\ii{\cap}%

\global\long\def\u{\cup}%

\global\long\def\I{\bigcap}%

\global\long\def\U{\bigcup}%

\global\long\def\s{\subset}%

\global\long\def\P{\prime}%

\global\long\def\ñ{\sim}%

\usepackage{tikz-cd}

\usepackage{tkz-graph}

\usepackage{multicol}

\usepackage{subcaption}

\newtheorem{theorem}{Theorem}[section]

\newtheorem{conjecture}[theorem]{Conjecture}
\newtheorem{corollary}[theorem]{Corollary}

\newtheorem{lemma}[theorem]{Lemma}

\newtheorem{problem}[theorem]{Problem}

\newtheorem{remark}[theorem]{Remark}

\usepackage{comment}
\begin{document}

\begin{abstract}
Let $\alpha(G)$ denote the cardinality of a maximum independent set.
An independent set $I$ of $G$ is critical if $\left|I\right|-\left|N(I)\right|\ge\left|J\right|-\left|N(J)\right|$
for every independent set $J$ of $G$. Let $\text{core}(G)$
and $\text{corona}(G)$ be the intersection/union of all
maximum independent sets of $G$. Let
$\text{ker}(G)$ and $\text{diadem}(G)$ be the intersection/union of all
critical independent sets of $G$. In this paper we prove
that 
\[
\left|\text{corona}(G)\right|+\left|\text{core}(G)\right|\le2\alpha(G)+k,
\]
\noindent where $k$ is the number of vertex-distinct odd cycles
in $G$, thus confirming a recent conjecture in the area. Moreover,
we prove that 
\[
\left|\text{nucleus}(G)\right|+\left|\text{diadem}(G)\right|\le2\alpha(G),
\]
\noindent thereby confirming another conjecture (Levit--Mandrescu 2014). As
an application of these facts, we obtain a chain of inequalities 
\[
\left|\text{nucleus}(G)\right|+\left|\text{diadem}(G)\right|\le2\alpha(G)\le\left|\text{corona}(G)\right|+\left|\text{core}(G)\right|\le2\alpha(G)+k.
\]
\noindent The paper concludes with a collection of related open problems.
\end{abstract}

\begin{keyword} 	König–Egerváry graphs, Independent sets, Matching, corona, ker, nucleus, diadem, core.	\MSC 15A09, 05C38 \end{keyword}
\begin{frontmatter} 
	\title{Inequalities Involving Core, Corona, and Critical Sets in General Graphs}

	\cortext[cor1]{Corresponding author}
	
	\author[IMASL,DEPTO]{Adrián Pastine} 	\ead{agpastine@unsl.edu.ar} 
	\author[IMASL,DEPTO]{Kevin Pereyra\corref{cor1}} 	\ead{kdpereyra@unsl.edu.ar}


	\address[IMASL]{Instituto de Matem\'atica Aplicada San Luis, Universidad Nacional de San Luis and CONICET, San Luis, Argentina.}
	\address[DEPTO]{Departamento de Matem\'atica, Universidad Nacional de San Luis, San Luis, Argentina.} 	

	\date{Received: date / Accepted: date} 
	
\end{frontmatter} %

\section{Introduction}\label{introasdm}

Let $\alpha(G)$ denote the cardinality of a maximum independent set,
and let $\mu(G)$ be the size of a maximum matching in $G=(V,E)$.
It is known that $\alpha(G)+\mu(G)$ equals the order of $G$,
in which case $G$ is a König--Egerváry graph 
\cite{deming1979independence,gavril1977testing,stersoul1979characterization}.
Various properties of König--Egerváry graphs were presented in 
\cite{bourjolly2009node,jarden2017two,levit2006alpha,levit2012critical}.
It is known that every bipartite graph is a König--Egerváry graph 
\cite{egervary1931combinatorial}. 

Let $\Omega^{*}(G)=\left\{ S:S\textnormal{ is an independent set of }G\right\}$,
$\Omega(G)=\{S:S$ is a maximum independent set of $G\}$,
$\textnormal{core}(G)=\I\left\{ S:S\in\Omega(G)\right\}$ 
\cite{levit2003alpha+}, and 
$\textnormal{corona}(G)=\U\left\{ S:S\in\Omega(G)\right\}$ 
\cite{boros2002number}. 
The number $d_{G}(X)=\left|X\right|-\left|N(X)\right|$ is the 
difference of the set $X\s V(G)$, and 
$d(G)=\max\{d_{G}(X):X\s V(G)\}$ is called the \emph{critical difference} of $G$.
A set $U\s V(G)$ is \emph{critical} if $d_{G}(U)=d(G)$ 
\cite{zhang1990finding}. 
The number $d_{I}(G)=\max\left\{ d_{G}(X):X\in\Omega^{*}(G)\right\}$ 
is called the \emph{critical independence difference} of $G$. 
If a set $X\s\Omega^{*}(G)$ satisfies $d_{G}(X)=d_{I}(G)$, then it is called 
a \emph{critical independent set} \cite{zhang1990finding}. 
Clearly, $d(G)\ge d_{I}(G)$ holds for every graph. 
It is known that $d(G)=d_{I}(G)$ for all graphs 
\cite{zhang1990finding}. 
We define $\textnormal{ker}(G)=\I\{ S:S$ is a critical independent set of $G\}$ 
\cite{levit2012vertices,lorentzen1966notes,schrijver2003combinatorial}.
Let $\textnormal{nucleus}(G)=\I\{S:S$ is a maximum critical independent 
set of $G\}$ \cite{jarden2019monotonic}, and 
$\textnormal{diadem}(G)=\U\{S:S$ is a critical independent set of $G\}$ 
\cite{short2015some}.

A graph $G$ is almost bipartite if it has exactly one odd cycle. 
It is known that $\textnormal{ker}(G)\s\textnormal{core}(G)$ holds for every graph 
\cite{levit2012vertices}. 
Equality holds for bipartite graphs \cite{levit2013critical}, 
for unicyclic non–König–Egerváry graphs \cite{levit2011core}, 
for almost bipartite non–König–Egerváry graphs 
\cite{levit2025almost,levit2024almost}, and 
for $R$-disjoint graphs, which generalize the class of almost bipartite 
non–König–Egerváry graphs \cite{kevin2025RDG}.

An almost bipartite non-König-Egerváry graph satisfies $\left|\text{corona}(G)\right|+\left|\text{core}(G)\right|=2\alpha(G)+1$
\cite{levit2025almost}. An $R$-disjoint graph with exactly $k$ odd cycles
satisfies
\[
\left|\text{corona}(G)\right|+\left|\text{ker}(G)\right|=2\alpha(G)+k,
\]
\noindent see \cite{kevin2025RDG}. However, it is easy to see that this equality
does not hold for every graph (see \cref{Figura3}).

\begin{figure}[h!]
	
	\begin{center}

		\tikzset{every picture/.style={line width=0.75pt}} 
		
		\begin{tikzpicture}[x=0.75pt,y=0.75pt,yscale=-1,xscale=1]
			
			\draw    (198.2,137.55) -- (165.85,98.47) ;
			\draw [shift={(165.85,98.47)}, rotate = 230.38] [color={rgb, 255:red, 0; green, 0; blue, 0 }  ][fill={rgb, 255:red, 0; green, 0; blue, 0 }  ][line width=0.75]      (0, 0) circle [x radius= 3.35, y radius= 3.35]   ;
			\draw [shift={(198.2,137.55)}, rotate = 230.38] [color={rgb, 255:red, 0; green, 0; blue, 0 }  ][fill={rgb, 255:red, 0; green, 0; blue, 0 }  ][line width=0.75]      (0, 0) circle [x radius= 3.35, y radius= 3.35]   ;
			\draw    (210.83,91.8) -- (198.2,137.55) ;
			\draw [shift={(198.2,137.55)}, rotate = 105.43] [color={rgb, 255:red, 0; green, 0; blue, 0 }  ][fill={rgb, 255:red, 0; green, 0; blue, 0 }  ][line width=0.75]      (0, 0) circle [x radius= 3.35, y radius= 3.35]   ;
			\draw [shift={(210.83,91.8)}, rotate = 105.43] [color={rgb, 255:red, 0; green, 0; blue, 0 }  ][fill={rgb, 255:red, 0; green, 0; blue, 0 }  ][line width=0.75]      (0, 0) circle [x radius= 3.35, y radius= 3.35]   ;
			\draw    (223.53,165.25) -- (198.2,137.55) ;
			\draw [shift={(198.2,137.55)}, rotate = 227.55] [color={rgb, 255:red, 0; green, 0; blue, 0 }  ][fill={rgb, 255:red, 0; green, 0; blue, 0 }  ][line width=0.75]      (0, 0) circle [x radius= 3.35, y radius= 3.35]   ;
			\draw [shift={(223.53,165.25)}, rotate = 227.55] [color={rgb, 255:red, 0; green, 0; blue, 0 }  ][fill={rgb, 255:red, 0; green, 0; blue, 0 }  ][line width=0.75]      (0, 0) circle [x radius= 3.35, y radius= 3.35]   ;
			\draw    (265.1,172.58) -- (223.53,165.25) ;
			\draw [shift={(223.53,165.25)}, rotate = 190.01] [color={rgb, 255:red, 0; green, 0; blue, 0 }  ][fill={rgb, 255:red, 0; green, 0; blue, 0 }  ][line width=0.75]      (0, 0) circle [x radius= 3.35, y radius= 3.35]   ;
			\draw [shift={(265.1,172.58)}, rotate = 190.01] [color={rgb, 255:red, 0; green, 0; blue, 0 }  ][fill={rgb, 255:red, 0; green, 0; blue, 0 }  ][line width=0.75]      (0, 0) circle [x radius= 3.35, y radius= 3.35]   ;
			\draw    (312.39,170.25) -- (265.1,172.58) ;
			\draw [shift={(265.1,172.58)}, rotate = 177.17] [color={rgb, 255:red, 0; green, 0; blue, 0 }  ][fill={rgb, 255:red, 0; green, 0; blue, 0 }  ][line width=0.75]      (0, 0) circle [x radius= 3.35, y radius= 3.35]   ;
			\draw [shift={(312.39,170.25)}, rotate = 177.17] [color={rgb, 255:red, 0; green, 0; blue, 0 }  ][fill={rgb, 255:red, 0; green, 0; blue, 0 }  ][line width=0.75]      (0, 0) circle [x radius= 3.35, y radius= 3.35]   ;
			\draw    (349.17,141.31) -- (312.39,170.25) ;
			\draw [shift={(312.39,170.25)}, rotate = 141.8] [color={rgb, 255:red, 0; green, 0; blue, 0 }  ][fill={rgb, 255:red, 0; green, 0; blue, 0 }  ][line width=0.75]      (0, 0) circle [x radius= 3.35, y radius= 3.35]   ;
			\draw [shift={(349.17,141.31)}, rotate = 141.8] [color={rgb, 255:red, 0; green, 0; blue, 0 }  ][fill={rgb, 255:red, 0; green, 0; blue, 0 }  ][line width=0.75]      (0, 0) circle [x radius= 3.35, y radius= 3.35]   ;
			\draw    (351.6,193.35) -- (312.39,170.25) ;
			\draw [shift={(312.39,170.25)}, rotate = 210.51] [color={rgb, 255:red, 0; green, 0; blue, 0 }  ][fill={rgb, 255:red, 0; green, 0; blue, 0 }  ][line width=0.75]      (0, 0) circle [x radius= 3.35, y radius= 3.35]   ;
			\draw [shift={(351.6,193.35)}, rotate = 210.51] [color={rgb, 255:red, 0; green, 0; blue, 0 }  ][fill={rgb, 255:red, 0; green, 0; blue, 0 }  ][line width=0.75]      (0, 0) circle [x radius= 3.35, y radius= 3.35]   ;
			\draw    (349.17,141.31) -- (351.6,193.35) ;
			\draw [shift={(351.6,193.35)}, rotate = 87.33] [color={rgb, 255:red, 0; green, 0; blue, 0 }  ][fill={rgb, 255:red, 0; green, 0; blue, 0 }  ][line width=0.75]      (0, 0) circle [x radius= 3.35, y radius= 3.35]   ;
			\draw [shift={(349.17,141.31)}, rotate = 87.33] [color={rgb, 255:red, 0; green, 0; blue, 0 }  ][fill={rgb, 255:red, 0; green, 0; blue, 0 }  ][line width=0.75]      (0, 0) circle [x radius= 3.35, y radius= 3.35]   ;
			\draw    (249.4,123.28) -- (265.1,172.58) ;
			\draw [shift={(265.1,172.58)}, rotate = 72.34] [color={rgb, 255:red, 0; green, 0; blue, 0 }  ][fill={rgb, 255:red, 0; green, 0; blue, 0 }  ][line width=0.75]      (0, 0) circle [x radius= 3.35, y radius= 3.35]   ;
			\draw [shift={(249.4,123.28)}, rotate = 72.34] [color={rgb, 255:red, 0; green, 0; blue, 0 }  ][fill={rgb, 255:red, 0; green, 0; blue, 0 }  ][line width=0.75]      (0, 0) circle [x radius= 3.35, y radius= 3.35]   ;
			\draw    (284.36,121.56) -- (265.1,172.58) ;
			\draw [shift={(265.1,172.58)}, rotate = 110.68] [color={rgb, 255:red, 0; green, 0; blue, 0 }  ][fill={rgb, 255:red, 0; green, 0; blue, 0 }  ][line width=0.75]      (0, 0) circle [x radius= 3.35, y radius= 3.35]   ;
			\draw [shift={(284.36,121.56)}, rotate = 110.68] [color={rgb, 255:red, 0; green, 0; blue, 0 }  ][fill={rgb, 255:red, 0; green, 0; blue, 0 }  ][line width=0.75]      (0, 0) circle [x radius= 3.35, y radius= 3.35]   ;
			\draw    (284.36,121.56) -- (249.4,123.28) ;
			\draw [shift={(249.4,123.28)}, rotate = 177.17] [color={rgb, 255:red, 0; green, 0; blue, 0 }  ][fill={rgb, 255:red, 0; green, 0; blue, 0 }  ][line width=0.75]      (0, 0) circle [x radius= 3.35, y radius= 3.35]   ;
			\draw [shift={(284.36,121.56)}, rotate = 177.17] [color={rgb, 255:red, 0; green, 0; blue, 0 }  ][fill={rgb, 255:red, 0; green, 0; blue, 0 }  ][line width=0.75]      (0, 0) circle [x radius= 3.35, y radius= 3.35]   ;
			\draw [color={rgb, 255:red, 255; green, 0; blue, 0 }  ,draw opacity=1 ][line width=1.5]    (349.17,141.31) -- (351.6,193.35) ;
			\draw [color={rgb, 255:red, 255; green, 0; blue, 0 }  ,draw opacity=1 ][line width=1.5]    (312.39,170.25) -- (265.1,172.58) ;
			\draw [color={rgb, 255:red, 255; green, 0; blue, 0 }  ,draw opacity=1 ][line width=1.5]    (284.36,121.56) -- (249.4,123.28) ;
			\draw [color={rgb, 255:red, 255; green, 0; blue, 0 }  ,draw opacity=1 ][line width=1.5]    (223.53,165.25) -- (198.2,137.55) ;
			\draw    (198.2,137.55) ;
			\draw [shift={(198.2,137.55)}, rotate = 0] [color={rgb, 255:red, 0; green, 0; blue, 0 }  ][fill={rgb, 255:red, 0; green, 0; blue, 0 }  ][line width=0.75]      (0, 0) circle [x radius= 3.35, y radius= 3.35]   ;
			\draw [shift={(198.2,137.55)}, rotate = 0] [color={rgb, 255:red, 0; green, 0; blue, 0 }  ][fill={rgb, 255:red, 0; green, 0; blue, 0 }  ][line width=0.75]      (0, 0) circle [x radius= 3.35, y radius= 3.35]   ;
			\draw    (223.53,165.25) ;
			\draw [shift={(223.53,165.25)}, rotate = 0] [color={rgb, 255:red, 0; green, 0; blue, 0 }  ][fill={rgb, 255:red, 0; green, 0; blue, 0 }  ][line width=0.75]      (0, 0) circle [x radius= 3.35, y radius= 3.35]   ;
			\draw [shift={(223.53,165.25)}, rotate = 0] [color={rgb, 255:red, 0; green, 0; blue, 0 }  ][fill={rgb, 255:red, 0; green, 0; blue, 0 }  ][line width=0.75]      (0, 0) circle [x radius= 3.35, y radius= 3.35]   ;
			\draw    (349.17,141.31) ;
			\draw [shift={(349.17,141.31)}, rotate = 0] [color={rgb, 255:red, 0; green, 0; blue, 0 }  ][fill={rgb, 255:red, 0; green, 0; blue, 0 }  ][line width=0.75]      (0, 0) circle [x radius= 3.35, y radius= 3.35]   ;
			\draw [shift={(349.17,141.31)}, rotate = 0] [color={rgb, 255:red, 0; green, 0; blue, 0 }  ][fill={rgb, 255:red, 0; green, 0; blue, 0 }  ][line width=0.75]      (0, 0) circle [x radius= 3.35, y radius= 3.35]   ;
			\draw    (312.39,170.25) ;
			\draw [shift={(312.39,170.25)}, rotate = 0] [color={rgb, 255:red, 0; green, 0; blue, 0 }  ][fill={rgb, 255:red, 0; green, 0; blue, 0 }  ][line width=0.75]      (0, 0) circle [x radius= 3.35, y radius= 3.35]   ;
			\draw [shift={(312.39,170.25)}, rotate = 0] [color={rgb, 255:red, 0; green, 0; blue, 0 }  ][fill={rgb, 255:red, 0; green, 0; blue, 0 }  ][line width=0.75]      (0, 0) circle [x radius= 3.35, y radius= 3.35]   ;
			\draw    (284.36,121.56) ;
			\draw [shift={(284.36,121.56)}, rotate = 0] [color={rgb, 255:red, 0; green, 0; blue, 0 }  ][fill={rgb, 255:red, 0; green, 0; blue, 0 }  ][line width=0.75]      (0, 0) circle [x radius= 3.35, y radius= 3.35]   ;
			\draw [shift={(284.36,121.56)}, rotate = 0] [color={rgb, 255:red, 0; green, 0; blue, 0 }  ][fill={rgb, 255:red, 0; green, 0; blue, 0 }  ][line width=0.75]      (0, 0) circle [x radius= 3.35, y radius= 3.35]   ;
			\draw    (265.1,172.58) ;
			\draw [shift={(265.1,172.58)}, rotate = 0] [color={rgb, 255:red, 0; green, 0; blue, 0 }  ][fill={rgb, 255:red, 0; green, 0; blue, 0 }  ][line width=0.75]      (0, 0) circle [x radius= 3.35, y radius= 3.35]   ;
			\draw [shift={(265.1,172.58)}, rotate = 0] [color={rgb, 255:red, 0; green, 0; blue, 0 }  ][fill={rgb, 255:red, 0; green, 0; blue, 0 }  ][line width=0.75]      (0, 0) circle [x radius= 3.35, y radius= 3.35]   ;
			\draw    (249.4,123.28) ;
			\draw [shift={(249.4,123.28)}, rotate = 0] [color={rgb, 255:red, 0; green, 0; blue, 0 }  ][fill={rgb, 255:red, 0; green, 0; blue, 0 }  ][line width=0.75]      (0, 0) circle [x radius= 3.35, y radius= 3.35]   ;
			\draw [shift={(249.4,123.28)}, rotate = 0] [color={rgb, 255:red, 0; green, 0; blue, 0 }  ][fill={rgb, 255:red, 0; green, 0; blue, 0 }  ][line width=0.75]      (0, 0) circle [x radius= 3.35, y radius= 3.35]   ;
			\draw    (351.6,193.35) ;
			\draw [shift={(351.6,193.35)}, rotate = 0] [color={rgb, 255:red, 0; green, 0; blue, 0 }  ][fill={rgb, 255:red, 0; green, 0; blue, 0 }  ][line width=0.75]      (0, 0) circle [x radius= 3.35, y radius= 3.35]   ;
			\draw [shift={(351.6,193.35)}, rotate = 0] [color={rgb, 255:red, 0; green, 0; blue, 0 }  ][fill={rgb, 255:red, 0; green, 0; blue, 0 }  ][line width=0.75]      (0, 0) circle [x radius= 3.35, y radius= 3.35]   ;
			\draw  [fill={rgb, 255:red, 74; green, 144; blue, 226 }  ,fill opacity=0.1 ][dash pattern={on 0.84pt off 2.51pt}] (151.33,100.79) .. controls (149.82,89.85) and (164.11,78.84) .. (183.26,76.19) .. controls (202.41,73.54) and (219.16,80.27) .. (220.67,91.21) .. controls (222.18,102.15) and (207.89,113.16) .. (188.74,115.81) .. controls (169.59,118.46) and (152.84,111.73) .. (151.33,100.79) -- cycle ;
			\draw [color={rgb, 255:red, 0; green, 0; blue, 255 }  ,draw opacity=1 ]   (249.4,123.28) ;
			\draw [shift={(249.4,123.28)}, rotate = 0] [color={rgb, 255:red, 0; green, 0; blue, 255 }  ,draw opacity=1 ][fill={rgb, 255:red, 0; green, 0; blue, 255 }  ,fill opacity=1 ][line width=0.75]      (0, 0) circle [x radius= 3.69, y radius= 3.69]   ;
			\draw [shift={(249.4,123.28)}, rotate = 0] [color={rgb, 255:red, 0; green, 0; blue, 255 }  ,draw opacity=1 ][fill={rgb, 255:red, 0; green, 0; blue, 255 }  ,fill opacity=1 ][line width=0.75]      (0, 0) circle [x radius= 3.69, y radius= 3.69]   ;
			\draw [color={rgb, 255:red, 0; green, 0; blue, 255 }  ,draw opacity=1 ]   (165.85,98.47) ;
			\draw [shift={(165.85,98.47)}, rotate = 0] [color={rgb, 255:red, 0; green, 0; blue, 255 }  ,draw opacity=1 ][fill={rgb, 255:red, 0; green, 0; blue, 255 }  ,fill opacity=1 ][line width=0.75]      (0, 0) circle [x radius= 3.69, y radius= 3.69]   ;
			\draw [shift={(165.85,98.47)}, rotate = 0] [color={rgb, 255:red, 0; green, 0; blue, 255 }  ,draw opacity=1 ][fill={rgb, 255:red, 0; green, 0; blue, 255 }  ,fill opacity=1 ][line width=0.75]      (0, 0) circle [x radius= 3.69, y radius= 3.69]   ;
			\draw [color={rgb, 255:red, 0; green, 0; blue, 255 }  ,draw opacity=1 ]   (210.83,91.8) ;
			\draw [shift={(210.83,91.8)}, rotate = 0] [color={rgb, 255:red, 0; green, 0; blue, 255 }  ,draw opacity=1 ][fill={rgb, 255:red, 0; green, 0; blue, 255 }  ,fill opacity=1 ][line width=0.75]      (0, 0) circle [x radius= 3.69, y radius= 3.69]   ;
			\draw [shift={(210.83,91.8)}, rotate = 0] [color={rgb, 255:red, 0; green, 0; blue, 255 }  ,draw opacity=1 ][fill={rgb, 255:red, 0; green, 0; blue, 255 }  ,fill opacity=1 ][line width=0.75]      (0, 0) circle [x radius= 3.69, y radius= 3.69]   ;
			\draw [color={rgb, 255:red, 0; green, 0; blue, 255 }  ,draw opacity=1 ]   (223.53,165.25) ;
			\draw [shift={(223.53,165.25)}, rotate = 0] [color={rgb, 255:red, 0; green, 0; blue, 255 }  ,draw opacity=1 ][fill={rgb, 255:red, 0; green, 0; blue, 255 }  ,fill opacity=1 ][line width=0.75]      (0, 0) circle [x radius= 3.69, y radius= 3.69]   ;
			\draw [shift={(223.53,165.25)}, rotate = 0] [color={rgb, 255:red, 0; green, 0; blue, 255 }  ,draw opacity=1 ][fill={rgb, 255:red, 0; green, 0; blue, 255 }  ,fill opacity=1 ][line width=0.75]      (0, 0) circle [x radius= 3.69, y radius= 3.69]   ;
			\draw [color={rgb, 255:red, 0; green, 0; blue, 255 }  ,draw opacity=1 ]   (351.6,193.35) ;
			\draw [shift={(351.6,193.35)}, rotate = 0] [color={rgb, 255:red, 0; green, 0; blue, 255 }  ,draw opacity=1 ][fill={rgb, 255:red, 0; green, 0; blue, 255 }  ,fill opacity=1 ][line width=0.75]      (0, 0) circle [x radius= 3.69, y radius= 3.69]   ;
			\draw [shift={(351.6,193.35)}, rotate = 0] [color={rgb, 255:red, 0; green, 0; blue, 255 }  ,draw opacity=1 ][fill={rgb, 255:red, 0; green, 0; blue, 255 }  ,fill opacity=1 ][line width=0.75]      (0, 0) circle [x radius= 3.69, y radius= 3.69]   ;
			\draw [color={rgb, 255:red, 0; green, 0; blue, 255 }  ,draw opacity=1 ]   (312.39,170.25) ;
			\draw [shift={(312.39,170.25)}, rotate = 0] [color={rgb, 255:red, 0; green, 0; blue, 255 }  ,draw opacity=1 ][fill={rgb, 255:red, 0; green, 0; blue, 255 }  ,fill opacity=1 ][line width=0.75]      (0, 0) circle [x radius= 3.69, y radius= 3.69]   ;
			\draw [shift={(312.39,170.25)}, rotate = 0] [color={rgb, 255:red, 0; green, 0; blue, 255 }  ,draw opacity=1 ][fill={rgb, 255:red, 0; green, 0; blue, 255 }  ,fill opacity=1 ][line width=0.75]      (0, 0) circle [x radius= 3.69, y radius= 3.69]   ;
			\draw [color={rgb, 255:red, 0; green, 0; blue, 255 }  ,draw opacity=1 ]   (349.17,141.31) ;
			\draw [shift={(349.17,141.31)}, rotate = 0] [color={rgb, 255:red, 0; green, 0; blue, 255 }  ,draw opacity=1 ][fill={rgb, 255:red, 0; green, 0; blue, 255 }  ,fill opacity=1 ][line width=0.75]      (0, 0) circle [x radius= 3.69, y radius= 3.69]   ;
			\draw [shift={(349.17,141.31)}, rotate = 0] [color={rgb, 255:red, 0; green, 0; blue, 255 }  ,draw opacity=1 ][fill={rgb, 255:red, 0; green, 0; blue, 255 }  ,fill opacity=1 ][line width=0.75]      (0, 0) circle [x radius= 3.69, y radius= 3.69]   ;
			\draw [color={rgb, 255:red, 0; green, 0; blue, 255 }  ,draw opacity=1 ]   (284.36,121.56) ;
			\draw [shift={(284.36,121.56)}, rotate = 0] [color={rgb, 255:red, 0; green, 0; blue, 255 }  ,draw opacity=1 ][fill={rgb, 255:red, 0; green, 0; blue, 255 }  ,fill opacity=1 ][line width=0.75]      (0, 0) circle [x radius= 3.69, y radius= 3.69]   ;
			\draw [shift={(284.36,121.56)}, rotate = 0] [color={rgb, 255:red, 0; green, 0; blue, 255 }  ,draw opacity=1 ][fill={rgb, 255:red, 0; green, 0; blue, 255 }  ,fill opacity=1 ][line width=0.75]      (0, 0) circle [x radius= 3.69, y radius= 3.69]   ;

			\draw (163,53.4) node [anchor=north west][inner sep=0.75pt]    {$\text{ker}( G)$};

		\end{tikzpicture}

	\end{center}
	\caption{In this example, the vertices in $\text{corona}(G)$ are shown in blue.
		Note that $\left|\text{corona}(G)\right|+\left|\text{ker}(G)\right|=8+2=10<2\cdot5+2=2\alpha(G)+2,$ moreover
		$2$ is the number of odd cycles in $G$.
	}
	\label{Figura3}
	
\end{figure}
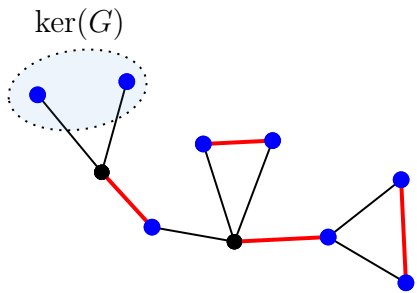

The previous results motivate the following conjecture.

\begin{conjecture}
	[\cite{kevinBAB}\label{ADSKMSkk}\label{babconj}] For every graph $G$, 
	\[
	\left|\textnormal{corona}(G)\right|+\left|\textnormal{ker}(G)\right|\le2\alpha(G)+k,
	\]
	\noindent where $k$ is the number of odd cycles in $G$. 
\end{conjecture}

In \cite{levit2014critical} it is shown that $\left|\text{corona}(G)\right|+\left|\text{core}(G)\right|=2\alpha(G)$
for every König-Egerváry graph, and in \cite{levit2014set} that every
graph satisfies $\left|\text{corona}(G)\right|+\left|\text{core}(G)\right|\ge2\alpha(G)$.
Moreover, the following conjecture is posed.

\begin{conjecture}[\label{asdasd1}\cite{levit2014critical}] For every graph $G$, 
	\[
	\left|\textnormal{ker}(G)\right|+\left|\textnormal{diadem}(G)\right|\le2\alpha(G).
	\]

\end{conjecture}

In this paper we show that $\left|\text{corona}(G)\right|+\left|\text{core}(G)\right|\le2\alpha(G)+k,$ where $k$ is the number of vertex distinct odd cycles. This is actually a strengthening of  \cref{ADSKMSkk}. Moreover, we show that \cref{asdasd1} holds in a stronger form,
 and as a consequence we obtain the following
inequalities.

\begin{theorem}
	For every graph $G$, 
	\begin{align*}
		\left|\textnormal{nucleus}(G)\right|+\left|\textnormal{diadem}(G)\right| & \le2\alpha(G)\\
		& \le\left|\textnormal{corona}(G)\right|+\left|\textnormal{core}(G)\right|\\
		& \le2\alpha(G)+k.
	\end{align*}
	\noindent where $k$ is the number of vertex distinct odd cycles in $G$.
\end{theorem}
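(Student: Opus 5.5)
The middle inequality $2\alpha(G)\le|\textnormal{corona}(G)|+|\textnormal{core}(G)|$ is already known for every graph \cite{levit2014set}, so it suffices to prove the outer two inequalities. I would prove them separately, reducing each to a König--Egerváry or Hall-type situation. Throughout, $k$ is read as the maximum number of pairwise vertex-disjoint odd cycles of $G$.

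\emph{Upper bound $|\textnormal{corona}(G)|+|\textnormal{core}(G)|\le2\alpha(G)+k$.}
\begin{enumerate}
\item Reduce to $H=G[\textnormal{corona}(G)]$. A vertex of $\textnormal{core}(G)$ lies in every maximum independent set, so none of its neighbours lies in any maximum independent set. Hence $\textnormal{core}(G)$ consists of isolated vertices of $H$.
\item Set $H_0=G[\textnormal{corona}(G)\setminus\textnormal{core}(G)]$. Then $\alpha(H_0)=\alpha(G)-|\textnormal{core}(G)|$, every vertex of $H_0$ lies in a maximum independent set of $H_0$, and $\textnormal{core}(H_0)=\emptyset$. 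The claim becomes $|V(H_0)|\le 2\alpha(H_0)+k$.
\item Since $\ker\subseteq\textnormal{core}$ \cite{levit2012vertices}, we get $\ker(H_0)=\emptyset$. Since $\emptyset$ is critical, the intersection of critical independent sets being empty strongly suggests $d(H_0)=0$. I would verify this by the standard fact that a nonempty critical set forces a nonempty $\ker$. Then $|N(A)|\ge|A|$ for every independent $A$.
\item Fix $S\in\Omega(H_0)$. By Hall's theorem there is a matching $M$ saturating $S$ into $V(H_0)\setminus S$. Let $R$ be the vertices of $V(H_0)\setminus S$ left unmatched by $M$. Since $|V(H_0)|=2\alpha(H_0)+|R|$, it remains to show $|R|\le k$.
\item For each $r\in R$, choose $T\in\Omega(H_0)$ with $r\in T$. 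In $G[S\triangle T]$ there is a perfect matching between $S\setminus T$ and $T\setminus S$, which holds because both are maximum. Alternating this matching against $M$ starting from $r$ must close up into an odd cycle. Otherwise we would get an $M$-augmenting structure, or an independent set larger than $\alpha(H_0)$.
\end{enumerate}

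The main obstacle is choosing these odd cycles, one per vertex of $R$, so that they are pairwise vertex-disjoint. I would first try a Gallai--Edmonds decomposition of $H_0$, where unmatched vertices correspond to odd factor-critical components, each containing an odd cycle disjoint from the others. Failing that, I would try induction on $|R|$, deleting one odd cycle together with its matched partners and checking that the hypotheses of step 2 persist.

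\emph{Lower bound $|\textnormal{nucleus}(G)|+|\textnormal{diadem}(G)|\le2\alpha(G)$.}
\begin{enumerate}
\item Let $X$ be a maximum critical independent set. Since $d(G)=d_I(G)$ \cite{zhang1990finding}, the supermodularity $d(A\cup B)+d(A\cap B)\ge d(A)+d(B)$ shows that $I\cup X$ is critical for every critical independent $I$.
\item Use Larson's structure theorem: $K=G[X\cup N(X)]$ is König--Egerváry, $X\in\Omega(K)$, and $N(X)$ is matched into $X$. From step 1 and the maximality of $X$, I would deduce that every critical independent set of $G$ lies in $X\cup N(X)$ and is contained in a maximum independent set of $K$.
\item Conclude that $\textnormal{diadem}(G)\subseteq\textnormal{corona}(K)$ and $\textnormal{core}(K)\subseteq\textnormal{nucleus}(G)$. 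For the second containment, every maximum critical independent set of $G$ is a maximum independent set of $K$, and conversely each $S\in\Omega(K)$ is critical in $G$, because $|S|=|X|$ and $N(S)\subseteq N(X)$ by the König--Egerváry matching.
\item Both containments need to turn into equalities, or at least into the reverse inequality for the nucleus term. I would then apply \cite{levit2014critical} to the König--Egerváry graph $K$ to get $|\textnormal{nucleus}(G)|+|\textnormal{diadem}(G)|\le|\textnormal{core}(K)|+|\textnormal{corona}(K)|=2\alpha(K)\le2\alpha(G)$.
\end{enumerate}

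The delicate point here is step 3, namely showing that maximum independent sets of $K$ are exactly the maximum critical independent sets of $G$.
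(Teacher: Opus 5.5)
\textbf{Upper bound.} Under your reading of $k$ the inequality is false, so this half cannot be completed. In the paper, two cycles are \emph{vertex-distinct} when $V(C)\neq V(C')$, which is much weaker than vertex-disjoint. With the disjoint reading the inequality already fails for $K_4$. There $\alpha=1$, $\textnormal{corona}(K_4)=V(K_4)$ and $\textnormal{core}(K_4)=\emptyset$, so the left side is $4$. But $K_4$ has only one vertex-disjoint odd cycle, so $2\alpha+1=3$. In your notation $H_0=K_4$ and $|R|=|V(H_0)|-2\alpha(H_0)=2$ for every choice of $M$. Neither Gallai--Edmonds nor induction can supply two disjoint cycles here; the obstacle you flag is fatal, not technical.

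Step 5 is also unsupported on its own terms. After $r\to P(r)\to M(P(r))$, the vertex $M(P(r))$ need not lie in $T$, so the alternation can stop without closing. Nothing prevents the cycles for two vertices of $R$ from having the same vertex set either.

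Your steps 1--2 are sound. After them you only need the paper's device, applied to $H_0$:
\begin{itemize}
\item Take $Y\subseteq V(H_0)$ inclusion-maximal with $H_0[Y]$ bipartite. Then $|Y|\le 2\alpha(H_0)$, since both colour classes are independent.
\item Each $v\in V(H_0)\setminus Y$ lies on an odd cycle of $H_0[Y\cup\{v\}]$. That cycle meets $V(H_0)\setminus Y$ only in $v$, so these cycles are pairwise vertex-distinct.
\end{itemize}
This gives $|V(H_0)|\le 2\alpha(H_0)+k$, hence the bound, and steps 3--5 become unnecessary. (The paper runs this device in $G$ with $Y\supseteq S_1\cup S_2$. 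Its displayed identity $2\alpha(G)=|S_1\cup S_2|+2|S_1\cap S_2|$ should have $|S_1\cap S_2|$ in place of $2|S_1\cap S_2|$. The core term is then absorbed only by noting that the matching from $X$ avoids $\textnormal{core}(G)$, which has no neighbours in the corona. Removing the core first, as you do, avoids this bookkeeping.)

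\textbf{Lower bound.} The ``conversely'' in step 3 is false. A vertex of $S\cap N(X)$ may have neighbours outside $X\cup N(X)$, so $N(S)\subseteq N(X)$ fails, and a maximum independent set of $K$ need not be critical in $G$. Take the path $x\,y\,z$ with a triangle $z\,w_1\,w_2$ attached at $z$. Here $d(G)=0$ and the only nonempty critical independent set is $\{x\}$, so $X=\{x\}$ and $K=G[\{x,y\}]\cong K_2$. But $d_G(\{y\})=-1$, and $\textnormal{core}(K)=\emptyset\subsetneq\{x\}=\textnormal{nucleus}(G)$. So the inequality $|\textnormal{nucleus}(G)|\le|\textnormal{core}(K)|$ that step 4 needs is false in general. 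Your containment $\textnormal{core}(K)\subseteq\textnormal{nucleus}(G)$ points the wrong way, and the core/corona identity for $K$ cannot be used term by term.

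The route is repairable. What you need is the family version for a K\"onig--Egerv\'ary graph: $|\bigcap\Gamma|+|\bigcup\Gamma|=2\alpha(K)$ for every nonempty $\Gamma\subseteq\Omega(K)$. To see it, fix a maximum matching $M$ of $K$. Since $\alpha(K)=|V(K)|-|M|$, every $S\in\Omega(K)$ consists of the $M$-exposed vertices plus exactly one end of each edge of $M$, so each edge contributes exactly $2$ to the sum. Apply this to the family $\Gamma$ of maximum critical independent sets of $G$. You also need that every critical independent set $I$ lies in some member of $\Gamma$:
\begin{itemize}
\item Put $A=X\cap N(I)$ and $B=I\cap N(X)$. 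Larson's matchings from $N(I)$ into $I$ and from $N(X)$ into $X$ give $|A|=|B|$.
\item Then $J=I\cup(X\setminus A)$ is independent and $N(J)\subseteq N(I\cup X)\setminus B$. So $d_G(J)\ge d_G(I\cup X)=d(G)$ by your step 1.
\item Also $|J|-|X|=|I\setminus X|-|B|\ge 0$, so $J$ is a maximum critical independent set containing $I$.
\item Maximality of $X$ forces $I\setminus X=B\subseteq N(X)$. This is your step 2, and it shows $\Gamma\subseteq\Omega(K)$.
\end{itemize}
You then get $|\textnormal{nucleus}(G)|+|\textnormal{diadem}(G)|=2|X|\le 2\alpha(G)$. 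This is an exact value, sharper and more transparent than the paper's count of the sets $\mathcal{A}_j,\mathcal{B}_j$ against a maximum independent set $I_1^M$ of $G$.
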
 

In \cref{sss1}, we provide a brief review of the notation that will be used throughout the paper. In \cref{introasdm}, we present the necessary context for the paper. In \cref{sss2} we prove the main results of the paper. We conclude the paper in \cref{sss5} by presenting some open problems.

\section{Preliminaries}\label{sss1}
All graphs considered in this paper are finite, undirected, and simple. 
For any undefined terminology or notation, we refer the reader to 
Lovász and Plummer \cite{LP} or Diestel \cite{Distel}.

Let \( G = (V, E) \) be a simple graph, where \( V = V(G) \) is the finite set of vertices and \( E = E(G) \) is the set of edges, with \( E \subseteq \{\{u, v\} : u, v \in V, u \neq v\} \). We denote the edge \( e=\{u, v\} \) as \( uv \). A subgraph of \( G \) is a graph \( H \) such that \( V(H) \subseteq V(G) \) and \( E(H) \subseteq E(G) \). A subgraph \( H \) of \( G \) is called a \textit{spanning} subgraph if \( V(H) = V(G) \). 

Let \( e \in E(G) \) and \( v \in V(G) \). We define \( G - e := (V, E \setminus \{e\}) \) and \( G - v := (V \setminus \{v\}, \{uw \in E : u,w \neq v\}) \). If \( X \subseteq V(G) \), the \textit{induced} subgraph of \( G \) by \( X \) is the subgraph \( G[X]=(X,F) \), where \( F:=\{uv \!\in\! E(G) : u, v \!\in \! X\} \).

Given a vertex set $S \subseteq V(G)$, we denote by $\partial(S)$ 
the set of edges having one endpoint in $S$ and the other in $V(G)-S$. 
We also denote by $\partial(S)$ the set of vertices in $S$ 
for which there exists an edge with one endpoint in $S$ and the other in $V(G)-S$.

A \textit{matching} \(M\) in a graph \(G\) is a set of pairwise non-adjacent edges. 
The \textit{matching number} of \(G\), denoted by  \(\mu(G)\), is the maximum cardinality of any matching in \(G\). 
Matchings induce an involution on the vertex set of the graph: \(M:V(G)\rightarrow V(G)\), where \(M(v)=u\) if \(uv \in M\), and \(M(v)=v\) otherwise. 
If \(S, U \subseteq V(G)\) with \(S \cap U = \emptyset\), we say that \(M\) is a matching from \(S\) to \(U\) if \(M(S) \subseteq U\).

A vertex set \( S \subseteq V \) is \textit{independent} if, for every pair of vertices \( u, v \in S \), we have \( uv \notin E \). 
The number of vertices in a maximum independent set is denoted by \( \alpha(G) \). 
A \textit{bipartite} graph is a graph whose vertex set can be partitioned into two disjoint independent sets. 
The number of vertices in a graph is called the \textit{order} of the graph.
A \textit{cycle} in $G$ is called \textit{odd} (resp. \textit{even}) if it has an odd (resp. even) number of edges.

\section{Main Results}\label{sss2}

Before proving the main result, we need some known results.

\begin{theorem}[\label{levit1}\cite{levit2012vertices}]
	For every graph $G$, we have $\textnormal{ker}(G)\subseteq\textnormal{core}(G)$.
\end{theorem}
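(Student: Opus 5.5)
The statement is the known inclusion $\textnormal{ker}(G)\subseteq\textnormal{core}(G)$. I would prove it by showing that every critical independent set extends to a maximum independent set, and then that every vertex of ker survives in every maximum independent set. The plan has two steps.

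\emph{Step 1 (extension and matching).} Let $I$ be a critical independent set. I first show there is a matching from $N(I)$ into $I$. By Hall's theorem, it suffices to show $|N(T)\cap I|\ge|T|$ for every $T\subseteq N(I)$. If this failed for some $T$, then $J=I\setminus N(T)$ would satisfy $N(J)\subseteq N(I)\setminus T$. Hence $d_G(J)\ge |I|-|N(T)\cap I|-|N(I)|+|T|>d_G(I)$, contradicting criticality. With this matching in hand, for any maximum independent set $S$ the set $(S\setminus N[I])\cup I$ is independent. Its size is at least $|S|-|S\cap N(I)|-|S\cap I|+|I|$, and $|S\cap N(I)|\le |N(I)|\le |I\setminus S|$ via the matching (vertices of $S\cap N(I)$ are matched into $I\setminus S$, since $S$ is independent). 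So this set is maximum. This is Butenko--Trukhanov's extension theorem in essence.

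\emph{Step 2 (intersection argument).} Fix $v\in\textnormal{ker}(G)$ and a maximum independent set $S$, and suppose $v\notin S$. Take a critical independent set $I$; $v\in I$ by definition of ker. The key claim is that $I\cap S$ is again critical, which would contradict $v\in\textnormal{ker}(G)$, since $v\notin I\cap S$.

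To prove the claim, compare $I$ with $I'=I\cap S$. Let $A=I\setminus S$. Vertices of $S\cap N(I)$ lie in $N(I)\setminus N(I\cap S)$, because they are adjacent to vertices of $I$ but, being in the independent set $S$, have no neighbours in $I\cap S$. So $|N(I)|-|N(I')|\ge |S\cap N(I)|$. Hence $d_G(I')\ge d_G(I)-|A|+|S\cap N(I)|$. Therefore it suffices to show $|S\cap N(I)|\ge|A|$.

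\emph{The main obstacle} is exactly this inequality $|S\cap N(I)|\ge |I\setminus S|$. It follows from maximality of $S$: the set $(S\setminus N(I)\setminus I)\cup I$ is independent, with size $|S|-|S\cap N(I)|+|A|$. Maximality of $S$ then forces $|A|\le |S\cap N(I)|$. So $I\cap S$ has difference at least $d(G)$ and is critical, but misses $v$. This contradiction shows $v\in S$ for every $S\in\Omega(G)$, i.e.\ $\textnormal{ker}(G)\subseteq\textnormal{core}(G)$.

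Notably, Step 2 turns out to need only the maximality of $S$, not Step 1. I would include Step 1 only as context, and check carefully the neighbour-count claim, including the possibility that $N(I)$ contains vertices of $S$ adjacent to several vertices of $I$.
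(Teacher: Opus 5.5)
Your Step 2 is correct and is essentially the paper's own short proof, which goes through its lemma that $I_c\cap I_M$ is critical: both arguments use $N(I\cap S)\subseteq N(I)\setminus (S\cap N(I))$ to reduce to $|I\setminus S|\le |S\cap N(I)|$, which the paper obtains (as the equality $|A|=|B|$) from Berge's matching theorem and Larson's lemma, whereas you derive the needed inequality directly from the maximality of $S$ via the independent set $(S\setminus N(I)\setminus I)\cup I$, a more elementary route. Step 1 is not needed, and its chain $|S\cap N(I)|\le |N(I)|\le |I\setminus S|$ is false in the middle: for the path with vertices $a,b,c$ in order and $I=S=\{a,c\}$ one has $|N(I)|=1>0=|I\setminus S|$; the bound $|S\cap N(I)|\le |I\setminus S|$ that you actually want there does follow from the matching, as you indicate.
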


Two cycles $C$ and $C^{\P}$ are considered \emph{vertex-distinct}
if $V(C)\neq V(C^{\P})$. Note that distinct cycles are not necessarily vertex-distinct.
A set of cycles is vertex-distinct if they are pairwise vertex-distinct.

\begin{theorem}[\cite{jarden2017two}\label{asijdnasn}]
	 For a graph G the following assertions are equivalent:
\begin{enumerate}
	\item $G$ is a König--Egerváry graph;
	\item for every $S_{1},S_{2}\in\Omega(G)$, there ir a matching from $V(G)-(S_{1}\u S_{2})$
	into $S_{1}\ii S_{2}$; 
	\item there exist $S_{1},S_{2}\in\Omega(G)$, such that there is a matching
	from $V(G)-(S_{1}\u S_{2})$ into $S_{1}\ii S_{2}$. 
\end{enumerate}
\end{theorem}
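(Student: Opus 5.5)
We prove the cycle of implications (1)$\Rightarrow$(2)$\Rightarrow$(3)$\Rightarrow$(1). The implication (2)$\Rightarrow$(3) is immediate, since $\Omega(G)\neq\emptyset$; take $S_1=S_2$, or any pair. Throughout, write $n=|V(G)|$, and for fixed $S_1,S_2\in\Omega(G)$ put $X=V(G)-(S_1\u S_2)$ and $Y=S_1\ii S_2$. Since $|S_1\u S_2|+|S_1\ii S_2|=2\alpha(G)$, we have $|X|=n-2\alpha(G)+|Y|$.

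\emph{(1)$\Rightarrow$(2).} First we show that in a König--Egerváry graph, every maximum matching $M$ matches $V(G)-S$ into $S$ for every $S\in\Omega(G)$. Every edge of $M$ has at least one endpoint in $V(G)-S$, because $S$ is independent. The edges of $M$ are disjoint and there are $\mu(G)=n-\alpha(G)=|V(G)-S|$ of them. Hence each edge of $M$ has exactly one endpoint in $V(G)-S$, and together these endpoints exhaust $V(G)-S$. So every edge of $M$ joins $V(G)-S$ to $S$ and saturates $V(G)-S$.

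Now fix one maximum matching $M$ and apply this to both $S_1$ and $S_2$. A vertex $x\in X$ lies outside both sets, so $M(x)\in S_1$ and $M(x)\in S_2$, that is, $M(x)\in Y$. Thus $M$ restricted to $X$ is a matching from $X$ into $S_1\ii S_2$.

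\emph{(3)$\Rightarrow$(1).} Since $\alpha(G)+\mu(G)\le n$ holds in every graph, it suffices to build a matching of size at least $n-\alpha(G)$. Let $M_1$ be the given matching from $X$ into $Y$; it has $|X|$ edges. Consider the bipartite graph $H=G[S_1\triangle S_2]$ with parts $S_1-S_2$ and $S_2-S_1$. Both parts have size $\alpha(G)-|Y|$.

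We claim $H$ has a perfect matching, and verify Hall's condition for $A\subseteq S_1-S_2$. Vertices of $A$ have no neighbours in $S_1$, in particular none in $Y$. Therefore $N(A)\ii S_2=N_H(A)$, and the set $(S_2-N_H(A))\u A$ is independent. If $|N_H(A)|<|A|$, this set would have more than $\alpha(G)$ vertices, which is a contradiction. So $H$ has a perfect matching $M_2$ with $\alpha(G)-|Y|$ edges.

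The matchings $M_1$ and $M_2$ live on the disjoint vertex sets $X\u Y$ and $S_1\triangle S_2$, so $M_1\u M_2$ is a matching. Its size is
\[
\mu(G)\ge |X|+\alpha(G)-|Y| = n-\alpha(G).
\]
Hence $\alpha(G)+\mu(G)=n$, i.e.\ $G$ is König--Egerváry.

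The only step requiring care is the Hall argument in (3)$\Rightarrow$(1): one must check that a Hall violator in $H$ yields an independent set larger than $\alpha(G)$, and this uses that $A$ has no neighbours in $Y$. The rest is counting.
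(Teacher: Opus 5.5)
Your proof is correct and complete. The paper itself gives no proof of this statement: it is quoted from \cite{jarden2017two} and used only as a black box, in the direction (1)$\Rightarrow$(2), inside the proof of \cref{mainlema1}. So there is no in-paper argument to compare yours against.

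Your argument is self-contained and rests on two facts plus the trivial bound $\alpha(G)+\mu(G)\le n$.
\begin{enumerate}
\item In a König--Egerváry graph, every maximum matching matches $V(G)-S$ into $S$ for every $S\in\Omega(G)$. Your pigeonhole count establishes this.
\item For any $S_1,S_2\in\Omega(G)$, the graph $G[(S_1-S_2)\cup(S_2-S_1)]$ has a perfect matching. Your Hall argument establishes this, since a Hall violator would give an independent set larger than $\alpha(G)$.
\end{enumerate}
Both facts are standard in this area. You check the delicate points correctly: $N(A)\cap S_2=N_H(A)$ because $A$ has no neighbours in $S_1\cap S_2$, and $V(G)$ partitions into $X$, $Y$ and the symmetric difference, which is what makes $M_1\cup M_2$ a matching.

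Your (1)$\Rightarrow$(2) argument in fact gives slightly more than the statement. A single fixed maximum matching restricts to the required matching from $V(G)-(S_1\cup S_2)$ into $S_1\cap S_2$ for all pairs $S_1,S_2\in\Omega(G)$ simultaneously.
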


\begin{theorem}\label{mainlema1}
	For every graph $G$, 
	\[
	\left|\textnormal{corona}(G)\right|+\left|\textnormal{core}(G)\right|\le2\alpha(G)+k,
	\]
	\noindent where $k$ is the maximum cardinality of a set of
	vertex-distinct odd cycles of $G$. 
\end{theorem}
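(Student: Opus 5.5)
The plan is to first reduce to the case $\textnormal{corona}(G)=V(G)$, and then to trade each failure of the König--Egerváry matching condition in \cref{asijdnasn} against an odd cycle.

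\textbf{Step 1 (reduction).} Put $H=G[\textnormal{corona}(G)]$. Every $S\in\Omega(G)$ lies in $\textnormal{corona}(G)$, and every independent set of $H$ is independent in $G$. Hence $\alpha(H)=\alpha(G)$ and $\Omega(H)=\Omega(G)$, so $\textnormal{corona}(H)=\textnormal{corona}(G)=V(H)$ and $\textnormal{core}(H)=\textnormal{core}(G)$. Odd cycles of $H$ are odd cycles of $G$, so the parameter $k$ can only drop. It therefore suffices to prove the theorem when $\textnormal{corona}(G)=V(G)$. Writing $n=|V(G)|$, the goal becomes
\[
n-\alpha(G)\le \alpha(G)-|\textnormal{core}(G)|+k .
\]

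\textbf{Step 2 (set-up with two maximum independent sets).} Fix $S_1,S_2\in\Omega(G)$ and put $X=V(G)-(S_1\u S_2)$. Then
\[
n=|X|+2\alpha(G)-|S_1\ii S_2|,
\]
so
\[
n+|\textnormal{core}(G)|=2\alpha(G)+|X|-\bigl(|S_1\ii S_2|-|\textnormal{core}(G)|\bigr).
\]
Choose $S_1,S_2$ so that $|S_1\ii S_2|-|\textnormal{core}(G)|$ is as large as possible relative to $|X|$. For instance, among pairs whose union is as large as possible, take one whose intersection is as small as possible; this makes $X$ small.

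Consider the bipartite graph $B$ between $X$ and $S_1\ii S_2$. The target inequality then reduces to
\[
|X|\le \nu+k,
\]
where $\nu$ is the size of a maximum matching of $B$, together with the bound $\nu\le |S_1\ii S_2|-|\textnormal{core}(G)|$. I will try to obtain the latter by showing that vertices of $S_1\ii S_2$ matched into $X$ can be exchanged out of some maximum independent set. The exchange is along an $M$-alternating path using the edges of $S_1\triangle S_2$, which gives a member of $\Omega(G)$ missing that vertex, so the vertex is not in $\textnormal{core}(G)$. If both bounds hold, summing gives exactly the desired inequality. When the deficiency $|X|-\nu$ is $0$, \cref{asijdnasn} says $G$ is König--Egerváry, and the bound $2\alpha(G)$ of \cite{levit2014critical} is consistent with this.

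\textbf{Step 3 (the main obstacle).} The hard part is to show that the Hall deficiency $|X|-\nu$ is at most the maximum number of vertex-distinct odd cycles. I will take a Hall violator $Y\s X$ with $|N_B(Y)|<|Y|$, or equivalently run a maximal alternating-path (Gallai--Edmonds type) search from each unmatched $x\in X$. Because $x\notin S_1\u S_2$, the vertex $x$ has neighbours in both $S_1-S_2$ and $S_2-S_1$, and $S_1\triangle S_2$ induces a bipartite graph whose parts have equal size. Using this, I expect to follow alternating paths in $G[S_1\triangle S_2]$ and return to $x$, closing an odd cycle $C_x$ through $x$. The returning path is forced because otherwise swapping along it would create an independent set of size $\alpha(G)+1$.

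The cycles $C_x$ for distinct unmatched $x$ are pairwise vertex-distinct, since $C_x\ii X=\{x\}$ if I ensure that each cycle uses only one vertex of $X$. This injection into a family of vertex-distinct odd cycles yields the bound. The delicate point I anticipate is guaranteeing that each $C_x$ really meets $X$ only in $x$; this is why the weak notion of vertex-distinctness, rather than disjointness, is exactly what is needed. If this fails directly, I will fall back to induction on $k$: delete $x$, check how $\alpha$, $\textnormal{core}$ and $\textnormal{corona}$ change, and observe that $k$ drops by at least one because $C_x$ disappears.
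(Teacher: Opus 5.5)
Your Steps 1 and 2 are sound. The reduction to $\textnormal{corona}(G)=V(G)$ is valid, and the identity $n+|\textnormal{core}(G)|=2\alpha(G)+|X|-(|S_1\cap S_2|-|\textnormal{core}(G)|)$ is correct. The bound $\nu\le|S_1\cap S_2|-|\textnormal{core}(G)|$ needs no alternating paths: if $s\in S_1\cap S_2$ is adjacent to $x\in X\subseteq\textnormal{corona}(G)$, then a maximum independent set containing $x$ misses $s$, so $s\notin\textnormal{core}(G)$. (Your bookkeeping is in fact the right one. The paper's chain starts from $2\alpha(G)=|S_1\cup S_2|+2|S_1\cap S_2|$, which overcounts by $|S_1\cap S_2|$, and its argument really needs exactly this bound.)

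The gap is Step 3. It carries all the content of the theorem, it is only sketched, and the mechanism you propose is false. A vertex $x\notin S_1\cup S_2$ need not have neighbours in both $S_1\setminus S_2$ and $S_2\setminus S_1$: maximality only gives neighbours in $S_1$ and in $S_2$, possibly inside $S_1\cap S_2$. Nor need an unmatched $x$ lie on any odd cycle meeting $X$ only in $x$. For example, take vertices $s,a,b,x_1,x_2$ with edges $ab,sx_1,sx_2,ax_1,ax_2,x_1x_2$, and let $S_1=\{s,a\}$, $S_2=\{s,b\}$. Then $\Omega(G)=\{\{s,a\},\{s,b\},\{b,x_1\},\{b,x_2\}\}$, so $\textnormal{corona}(G)=V(G)$. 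Also $X=\{x_1,x_2\}$ and $S_1\cap S_2=\{s\}$, so $\nu=1$. The unmatched vertex, say $x_2$, has no neighbour in $S_2\setminus S_1=\{b\}$. Since $G-x_1$ is a tree, every odd cycle through $x_2$ contains $x_1\in X$. Your fallback induction on $k$ is not worked out either. Deleting $x$ can remove several vertices from the corona and enlarge the core, so the inequality does not transfer automatically.

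The missing idea is to stop charging odd cycles to the unmatched vertices of an arbitrary maximum matching. Instead, first choose $X'\subseteq X$ inclusion-maximal such that $G[S_1\cup S_2\cup X']$ is bipartite. This graph is König--Egerváry, and $S_1,S_2$ are still maximum independent sets in it. So \cref{asijdnasn} gives a matching of $X'$ into $S_1\cap S_2$, and hence $\nu\ge|X'|$. By maximality, each $v\in X\setminus X'$ lies on an odd cycle of $G[S_1\cup S_2\cup X'\cup\{v\}]$. These cycles may share vertices of $X'$, as $x_1$ is shared above. However, $v$ is the only vertex of $X\setminus X'$ on its cycle, so the cycles are pairwise vertex-distinct and $|X|-\nu\le|X\setminus X'|\le k$. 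This is the paper's argument, and combined with your Step 2 it completes the proof.

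Your Step 1 also gives a shorter route that avoids matchings altogether. In $G[\textnormal{corona}(G)]$ every core vertex is isolated, so deleting the core lowers both the order and $\alpha$ by $|\textnormal{core}(G)|$. This reduces the claim to $n\le 2\alpha+k$ for the resulting induced subgraph. That inequality follows by deleting one vertex from each of the at most $k$ distinct vertex sets of odd cycles and taking the larger side of the remaining bipartite graph.
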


\begin{proof}
	Let $S_1,S_2$ be two maximum independent sets of $G$.
	Note that $G[S_1\cup S_2]$ is a bipartite graph. Let $X\subseteq\text{corona}(G)$
	be a maximum-cardinality set such that $G[S_1\cup S_2\cup X]$ is a bipartite
	graph. Then, by \cref{asijdnasn}, there exists a matching from $X$ into
	$S_1\cap S_2$. Hence $\left|X\right|\le\left|S_{1}\cap S_{2}\right|$.
	Therefore 
	\begin{eqnarray*}
		2\alpha(G) & = & \left|S_{1}\cup S_{2}\right|+2\left|S_{1}\cap S_{2}\right|\\
		& \ge & \left|S_{1}\cup S_{2}\right|+\left|X\right|+\left|S_{1}\cap S_{2}\right|\\
		& \ge & \left|S_{1}\cup S_{2}\right|+\left|X\right|+\left|\text{core}(G)\right|
	\end{eqnarray*}
	Note that $S_{1}\cup S_{2}\cup X\subseteq\text{corona}(G)$ and for
	each vertex $v\in\text{corona}(G)-(S_{1}\cup S_{2}\cup X)$ there exists
	an odd cycle containing $v$ in $G[S_{1}\cup S_{2}\cup X\cup\{v\}]$. As the cycles obtained for vertices in $\textnormal{corona}(G)-(S_1 \u S_2 \u X)$ are vertex distinct, this implies
	\[
	k\ge\left|\text{corona}(G)-(S_{1}\cup S_{2}\cup X)\right|.
	\]
	Therefore
	\begin{align*}
		2\alpha(G)+k & \ge\left|\text{corona}(G)-(S_{1}\cup S_{2}\cup X)\right|+\left|S_{1}\cup S_{2}\cup X\right|+\left|\text{core}(G)\right|\\
		& =\left|\text{corona}(G)\right|+\left|\text{core}(G)\right|.
	\end{align*}
	As desired.
\end{proof}

A similar argument to the proof of \cref{mainlema1} shows the following.

\begin{remark}
	For every graph $G$ of order $n$ we have 
	\[
	n\le2\alpha(G)+k,
	\]
	\noindent where $k$ is the maximum cardinality of a set of
	vertex-distinct odd cycles of $G$. 
\end{remark}

By \cref{levit1}, we have $\left|\textnormal{corona}(G)\right|+\left|\textnormal{ker}(G)\right|\le\left|\textnormal{corona}(G)\right|+\left|\textnormal{core}(G)\right|$.
Thus $k$ in \cref{mainlema1} is clearly a lower bound for the
number of odd cycles in the graph $G$. Therefore, this verifies
\cref{babconj}. 

\begin{corollary}
	For every graph $G$, 
	\[
	\left|\textnormal{corona}(G)\right|+\left|\textnormal{core}(G)\right|\le2\alpha(G)+k,
	\]
	\noindent where $k$ is the number of odd cycles in $G$. 
\end{corollary}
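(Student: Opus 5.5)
This corollary follows from \cref{mainlema1}; the only work is to compare the two meanings of $k$. Let $k_{0}$ be the maximum cardinality of a set of vertex-distinct odd cycles of $G$, which is the quantity appearing in \cref{mainlema1}. Let $k$ be the number of odd cycles of $G$. Then \cref{mainlema1} gives
\[
\left|\textnormal{corona}(G)\right|+\left|\textnormal{core}(G)\right|\le2\alpha(G)+k_{0}.
\]
It therefore suffices to show $k_{0}\le k$.

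For this, I will fix a set $\mathcal{C}$ of pairwise vertex-distinct odd cycles with $|\mathcal{C}|=k_{0}$. Two members $C\neq C^{\P}$ of $\mathcal{C}$ satisfy $V(C)\neq V(C^{\P})$. In particular they are distinct cycles of $G$: equal cycles have equal vertex sets.

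Hence $\mathcal{C}$ is a set of $k_{0}$ distinct odd cycles of $G$, and so $k_{0}\le k$. Combining this with the display above gives $\left|\textnormal{corona}(G)\right|+\left|\textnormal{core}(G)\right|\le2\alpha(G)+k$, as required.

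The only point needing care is the convention for counting odd cycles. Under any reasonable convention (cycles as subgraphs, or as cyclic vertex sequences up to rotation and reflection), cycles with different vertex sets are counted separately. So vertex-distinctness is a stronger requirement than distinctness, and the comparison $k_{0}\le k$ holds under every such convention.

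Finally, by \cref{levit1} we have $\left|\textnormal{ker}(G)\right|\le\left|\textnormal{core}(G)\right|$. Hence the same bound also yields \cref{babconj}.
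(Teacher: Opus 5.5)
Your argument is correct and is exactly the paper's. The corollary is read off from \cref{mainlema1} by noting that pairwise vertex-distinct odd cycles are in particular distinct odd cycles, so the largest such family has at most as many members as $G$ has odd cycles. (All the substance lies in \cref{mainlema1}, which does hold, although its printed proof should start from $2\alpha(G)=|S_1\cup S_2|+|S_1\cap S_2|$ rather than $+2|S_1\cap S_2|$; the $|\textnormal{core}(G)|$ term is then recovered because no vertex of $\textnormal{corona}(G)$ is adjacent to a vertex of $\textnormal{core}(G)$, so the matching from $X$ lands in $(S_1\cap S_2)\setminus\textnormal{core}(G)$.)
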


Thus our next goal is to prove \cref{mainlema2} in order to verify \cref{asdasd1}.

\begin{theorem}
	[\label{19}\cite{berge2005some}] An independent set
	$S$ is maximum if and only if every independent set disjoint
	from $S$ can be matched into $S$. 
\end{theorem}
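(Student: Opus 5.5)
Both directions rest on one elementary fact, which I prove first: two disjoint independent sets $A,B$ with $|A|>|B|$ and $B\supseteq N(A)\cap B$ cannot have $A$ matched into $B$. In particular, if $A$ is independent and $A\cap S=\emptyset$, then any matching of $A$ into $S$ uses only edges between $A$ and $N(A)\cap S$, so it has size at most $|N(A)\cap S|$.

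\emph{($\Rightarrow$)} Let $S$ be maximum and let $A$ be independent with $A\cap S=\emptyset$. To show $A$ can be matched into $S$, I apply Hall's theorem in the bipartite graph $H$ between $A$ and $S$, with the edges of $G$. Suppose Hall's condition fails. Then there is $B\subseteq A$ with $|N(B)\cap S|<|B|$. Set
\[
S' = (S - N(B)) \cup B .
\]
This set is independent. Indeed, $B\subseteq A$ is independent, $S-N(B)$ is independent, and no vertex of $S-N(B)$ is adjacent to $B$ by construction. Its size is
\[
|S'| = |S| - |N(B)\cap S| + |B| > |S|,
\]
which contradicts the maximality of $S$. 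Hence Hall's condition holds, and $A$ is matched into $S$.

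\emph{($\Leftarrow$)} Assume $S$ is independent and every independent set disjoint from $S$ can be matched into $S$. Let $T$ be a maximum independent set, and put $A = T - S$, which is independent and disjoint from $S$. By hypothesis there is an injection from $A$ into $S$. Its image lies in $S - T$, because vertices of $S\cap T$ belong to $T$ and so have no neighbours in $A\subseteq T$. Therefore
\[
|T - S| \le |S - T|,
\]
and so $|T|\le|S|$. Thus $S$ is maximum.

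The only genuine step is the Hall exchange argument in ($\Rightarrow$): the deficient set $B$ is swapped in for its neighbourhood in $S$, which produces a larger independent set. The second direction is routine once one observes that the matching partners must avoid $S\cap T$.
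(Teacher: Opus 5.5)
Your proof is correct. The paper does not prove this statement: it quotes it from Berge \cite{berge2005some} and uses it as a black box, for example in \cref{lem: ind critico ind max}, so there is no argument in the paper to compare your route against.

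What you give is the standard self-contained proof. For ($\Rightarrow$), you apply Hall's theorem in the bipartite graph $G[A\cup S]$ and swap a Hall-deficient $B\subseteq A$ in for $N(B)\cap S$, which yields a larger independent set. For ($\Leftarrow$), you observe that a matching of $T-S$ into $S$ must land in $S-T$. Your exchange set $(S-N(B))\cup B$ is the same kind of swap as $(I_M\setminus B)\cup A$ in \cref{lem: ind critico ind max}, so it also makes that step of the paper transparent.

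The one thing to clean up is your opening paragraph. The ``elementary fact'' is vacuous as stated: the hypothesis $B\supseteq N(A)\cap B$ holds for any two sets, and the conclusion is just pigeonhole. Neither direction uses it, so the sentence saying both directions rest on it is inaccurate, and you can delete the paragraph.
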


The following Lemma is a straightforward application of Hall's Theorem.

\begin{lemma}
	[\cite{larson2007note}\label{asdm1}]  Let $G$ be a graph and $I$ a
	critical independent set of $G$. Then there exists a maximum matching
	of $G$ that matches the vertices $N(S)$ into (a subset of) the
	vertices of $I$.
\end{lemma}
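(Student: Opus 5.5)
Throughout, read $N(S)$ in the statement as $N(I)$. The plan has two steps. First, use Hall's Theorem to find a matching $M_0$ from $N(I)$ into $I$. Second, glue $M_0$ to a maximum matching of the graph left after deleting $I\cup N(I)$, and show by a counting argument that the result is a maximum matching of $G$.

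\emph{Step 1 (Hall's condition).} I will verify Hall's condition for the bipartite graph between $N(I)$ and $I$. Suppose for contradiction that some $A\subseteq N(I)$ satisfies $|N(A)\cap I|<|A|$. Set $I'=I-(N(A)\cap I)$. This set is independent, being a subset of $I$. No vertex of $I'$ is adjacent to a vertex of $A$, by the definition of $I'$, and every neighbor of a vertex of $I'$ lies in $N(I)$. Hence $N(I')\subseteq N(I)-A$. Then
\[
d_G(I')\ge |I|-|N(A)\cap I|-|N(I)|+|A|>|I|-|N(I)|=d_G(I)=d_I(G).
\]
This contradicts the maximality of $d_I(G)$ over independent sets. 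Therefore Hall's Theorem gives a matching $M_0$ saturating $N(I)$ with $M_0(N(I))\subseteq I$.

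\emph{Step 2 (extension to a maximum matching).} Let $H=G-(I\cup N(I))$ and let $M_1$ be a maximum matching of $H$. Then $M=M_0\cup M_1$ is a matching of $G$ of size $|N(I)|+\mu(H)$, and it matches $N(I)$ into $I$. It remains to show $\mu(G)\le |N(I)|+\mu(H)$. Take any matching $M'$ of $G$ and consider its edges meeting $I\cup N(I)$. Since $I$ is independent, an edge with an endpoint in $I$ has its other endpoint in $N(I)$. Any other edge meeting $I\cup N(I)$ has an endpoint in $N(I)$ by definition. So every such edge uses a vertex of $N(I)$, and since $M'$ is a matching there are at most $|N(I)|$ of them. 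The remaining edges of $M'$ lie in $H$, so there are at most $\mu(H)$ of them. Hence $|M'|\le|N(I)|+\mu(H)=|M|$, and $M$ is a maximum matching of $G$ with the required property.

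The only real content is the Hall step, which depends on the choice of $I'$ and on the inclusion $N(I')\subseteq N(I)-A$. I expect that to be the point needing care; the extension step is routine counting.
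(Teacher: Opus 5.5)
Your proof is correct and takes the route the paper indicates; the paper gives no argument beyond citing Larson and calling the lemma a straightforward application of Hall's Theorem, and your Step 1 is exactly that: a Hall-deficient $A\subseteq N(I)$ would yield the independent set $I'$ with larger difference than $I$. Your Step 2 glues the Hall matching to a maximum matching of $G-(I\cup N(I))$ and shows $\mu(G)\le |N(I)|+\mu(G-(I\cup N(I)))$, which correctly supplies the detail needed for a \emph{maximum} matching of $G$ that the paper's one-line remark leaves implicit.
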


From \cref{19} and \cref{asdm1} we obtain \cref{lem: ind critico ind max}.

\begin{lemma}\label{lem: ind critico ind max}
	Let $I_c$ be a critical independent set and $I_M$ is a maximum independent set. If 
	\begin{align*}
		C=&I_c\cap I_M,&A=&I_c\setminus C,& B=N(A)\cap I_M,
	\end{align*}
	then
	$|A|=|B|$, and $(I_M\setminus B)\cup A$ is a maximum independent set. Moreover, $C$ is a critical independent set.
	
\end{lemma}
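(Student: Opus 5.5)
We work in three steps: first $|A|\le|B|$ via \cref{asdm1}, then $|B|\le|A|$ via \cref{19} (so $|A|=|B|$ and the swapped set is maximum), and finally a difference computation showing $C$ is critical.

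\emph{Step 1: $|A|\le|B|$.} Since $I_c$ is independent, $C\subseteq I_c$ has no neighbours in $I_c$, and $A\cap I_M=\emptyset$ by construction. By \cref{asdm1} there is a matching $M$ sending $N(I_c)$ into $I_c$. Because $B\subseteq N(A)\subseteq N(I_c)$, $M$ matches $B$ into $I_c$. A vertex of $B$ lies in $I_M$, so it has no neighbours in $C\subseteq I_M$. Hence $M$ matches $B$ into $A$, which gives $|B|\le|A|$.

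\emph{Step 2: $|B|\le|A|$ and the swap.} The set $A$ is independent and disjoint from $I_M$. By \cref{19}, $A$ can be matched into $I_M$, and necessarily into $N(A)\cap I_M=B$. Thus $|A|\le|B|$, and so $|A|=|B|$. Now consider $(I_M\setminus B)\cup A$. It is independent because $A$ is independent, $I_M\setminus B$ is independent, and no vertex of $A$ has a neighbour in $I_M\setminus B$ (all neighbours of $A$ inside $I_M$ lie in $B$ by definition). Its size is $|I_M|-|B|+|A|=\alpha(G)$, so it is a maximum independent set.

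\emph{Step 3: $C$ is critical.} We show $d_G(C)\ge d_G(I_c)$; the reverse inequality holds because $I_c$ is critical. Write $N(I_c)=N(C)\cup N(A)$, so
\[
|N(I_c)|=|N(C)|+|N(A)\setminus N(C)|.
\]
It therefore suffices to show $|N(A)\setminus N(C)|\ge |A|$. Step 1 gives a matching from $B$ onto $A$ (since $|A|=|B|$ it is a bijection). We have $B\subseteq N(A)$, and $B\cap N(C)=\emptyset$ because $B,C\subseteq I_M$. Hence $B\subseteq N(A)\setminus N(C)$ and $|N(A)\setminus N(C)|\ge|B|=|A|$. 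Then
\[
d_G(C)=|C|-|N(C)|\ge |C|+|A|-|N(I_c)|=d_G(I_c)=d_I(G),
\]
so $C$, being independent, is a critical independent set.

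The main obstacle is Step 3, specifically seeing that $B$ avoids $N(C)$ so the count of $N(I_c)$ splits cleanly. This uses $B,C\subseteq I_M$, and it is exactly where both of the earlier inequalities are needed together.
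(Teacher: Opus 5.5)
Your proof is correct and follows the paper's route. Your Step 3 is the same computation as the paper's observation $N(C)\subseteq N(I_c)\setminus B$ combined with $|B|=|A|$, and your Steps 1--2 spell out what the paper leaves as following ``directly'' from \cref{19} and \cref{asdm1}; the only slip is cosmetic: the headings of Steps 1 and 2 (and your opening sentence) have the inequalities swapped, since Step 1 actually proves $|B|\le|A|$ and Step 2 proves $|A|\le|B|$.
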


\begin{proof}
	We prove that $C$ is a critical set. The rest follows directly
	from \cref{19} and \cref{asdm1}. Note that 
	\[
	N(C)\subseteq N(I_{c})-B.
	\]
	Therefore,
	\begin{eqnarray*}
		\left|C\right|-\left|N(C)\right| & \ge & \left|C\right|-\left|N(I_{c})-B\right|\\
		& = & \left|C\right|-\left|N(I_{c})\right|+\left|B\right|\\
		& = & \left|C\right|+\left|B\right|-\left|N(I_{c})\right|\\
		& = & \left|I_{c}\right|-\left|N(I_{c})\right|.
	\end{eqnarray*}
	Thus, $C$ is a critical independent set.
\end{proof}

\begin{corollary}\label{cor: ind - B + A es ind}
	Let $I_c$ be a critical independent set and $I_M$ is a maximum independent set. If 
	\begin{align*}
		C=&I_c\cap I_M,&A=&I_c\setminus C,& B=N(A)\cap I_M,
	\end{align*}
	then $(I_M \cup A)\setminus B$ is a maximum independent set containing $I_c$. 
\end{corollary}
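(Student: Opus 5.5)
The plan is to apply \cref{lem: ind critico ind max} to the pair $I_c, I_M$ and then show that the set $J=(I_M\cup A)\setminus B$ is independent, has size $\alpha(G)$, and contains $I_c$.

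\emph{Containment and size.} By definition $C=I_c\cap I_M\subseteq I_M$. Since $B\subseteq N(A)$ and $C\cup A=I_c$ is independent, no vertex of $C$ is adjacent to a vertex of $A$, so $C\cap B=\emptyset$. Hence $C\subseteq I_M\setminus B\subseteq J$, and $A\subseteq J$ trivially, giving $I_c=C\cup A\subseteq J$. For the size, $A$ is disjoint from $I_M$ because $A=I_c\setminus I_M$, and $B\subseteq I_M$. Therefore
\[
|J|=|I_M|-|B|+|A|=\alpha(G),
\]
using $|A|=|B|$ from \cref{lem: ind critico ind max}.

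\emph{Independence.} Both $A$ and $I_M\setminus B$ are independent. Any edge between them would join some $a\in A$ to some $v\in I_M$ with $v\notin B$. But every neighbour of $A$ lying in $I_M$ belongs to $N(A)\cap I_M=B$, so no such edge exists. Thus $J$ is independent of size $\alpha(G)$, i.e.\ a maximum independent set containing $I_c$. Note also that $(I_M\cup A)\setminus B=(I_M\setminus B)\cup A$ because $A\cap B=\emptyset$ (as $A\cap I_M=\emptyset$), so $J$ coincides with the set that \cref{lem: ind critico ind max} already asserts is a maximum independent set.

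The only substantive input is $|A|=|B|$, which rests on \cref{19} and \cref{asdm1}. If one wants to verify it directly: $A$ is independent and disjoint from $I_M$, so by \cref{19} $A$ can be matched into $I_M$, necessarily into $N(A)\cap I_M=B$, giving $|A|\le|B|$. For the reverse inequality, note $B\subseteq N(I_c)$. By \cref{asdm1} applied to the critical set $I_c$, there is a matching of $N(I_c)$ into $I_c$. The vertices of $B\subseteq I_M$ cannot be matched to $C\subseteq I_M$, since $I_M$ is independent, so they are matched into $A$, giving $|B|\le|A|$. This counting step is the only real obstacle; everything else is elementary set bookkeeping.
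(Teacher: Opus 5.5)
Your proof is correct and is essentially the paper's argument: the corollary is read off from \cref{lem: ind critico ind max}, which gives $|A|=|B|$ and that $(I_M\setminus B)\cup A$ is a maximum independent set, and the only additional observation needed is $C\cap B=\emptyset$, so that $I_c=C\cup A$ lies in this set. Your direct derivation of $|A|\le|B|$ via \cref{19} and of $|B|\le|A|$ via \cref{asdm1} spells out exactly what the paper compresses into ``the rest follows directly from'' those two results.
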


Observe that, by \cref{lem: ind critico ind max}, we have $\text{ker}(G)\subseteq I_{M}$
for every maximum independent set $I_{M}$. This yields
a short proof of \cref{levit1}.

\begin{lemma}
	[\label{9} \cite{levit2012critical}] For a graph $G$, if $I$ and $J$ are critical in $G$, then $I\u J$ and $I\ii J$ are critical as well.
\end{lemma}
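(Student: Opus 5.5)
The plan is to derive both claims from one submodularity inequality for neighbourhoods, combined with the fact, recalled in the introduction, that $d(G)=d_I(G)$ (Zhang). Let $I,J$ be critical independent sets, so $d_G(I)=d_G(J)=d_I(G)=d(G)$. The first step is the counting inequality
\[
\left|N(I\cup J)\right|+\left|N(I\cap J)\right|\le\left|N(I)\right|+\left|N(J)\right|.
\]
To prove it, note that $N(I\cup J)=N(I)\cup N(J)$ and $N(I\cap J)\subseteq N(I)\cap N(J)$. Each vertex is then counted on the left at most as many times as on the right. Adding $|I\cup J|+|I\cap J|=|I|+|J|$ gives
\[
d_G(I\cup J)+d_G(I\cap J)\ge d_G(I)+d_G(J)=2d(G).
\]

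Next, $d(G)$ is the maximum of $d_G(X)$ over \emph{all} subsets $X\subseteq V(G)$, so each term on the left is at most $d(G)$. Hence
\[
d_G(I\cup J)=d_G(I\cap J)=d(G).
\]
The set $I\cap J$ is independent, being a subset of $I$, so it is a critical independent set. The set $I\cup J$ is at least critical in the sense of sets. Moreover, all the inequalities above are tight, which gives $N(I\cap J)=N(I)\cap N(J)$. I intend to use this equality in the last step.

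The main obstacle is showing that $I\cup J$ is itself independent, so that it is critical in the independent-set sense used for $\text{diadem}(G)$. Suppose, for contradiction, that there is an edge $uv$ with $u\in I\setminus J$ and $v\in J\setminus I$. My first attempt is a matching argument. By \cref{asdm1} there is a maximum matching $M$ that matches $N(J)$ into $J$; note that $u\in N(J)$. Restricted to $I$, criticality of $I$ means no subset of $I$ has a larger difference. So I would try to show that replacing $I$ by the independent set $(I\setminus N(J))\cup(J\setminus N(I))$, or a similar exchange in the spirit of \cref{cor: ind - B + A es ind}, does not decrease the difference. If the edge $uv$ exists, the matching $M$ should allow this exchange to strictly gain. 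That would contradict the maximality of $d_I(G)$.

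If this exchange gets messy, my fallback is to bound the difference of the independent set $X'=(I\cup J)\setminus N(I\cup J)$ directly. I would show $d_G(X')\ge d_G(I\cup J)$ by charging each removed vertex to a distinct vertex of $N(I\cup J)$ that leaves the neighbourhood, with the charging supplied by the matching $M$. Then $X'$ is a critical independent set containing $I\cap J$. Using the equality $N(I\cap J)=N(I)\cap N(J)$ from the tight case, this should force $X'=I\cup J$.
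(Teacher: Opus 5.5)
Your first two paragraphs already prove the lemma completely. In this paper a set $U$ is critical when $d_G(U)=d(G)$, with no independence required. Your inequality $|N(I\cup J)|+|N(I\cap J)|\le|N(I)|+|N(J)|$, obtained from $N(I\cup J)=N(I)\cup N(J)$ and $N(I\cap J)\subseteq N(I)\cap N(J)$, gives $d_G(I\cup J)+d_G(I\cap J)\ge 2d(G)$. Since neither term can exceed $d(G)$, both equal $d(G)$. The paper gives no proof of its own, only a citation, and this supermodularity argument is the standard one. It does not even need independence of $I,J$ or Zhang's theorem $d(G)=d_I(G)$. The intersection $I\cap J$ inherits independence from $I$. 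That intersection part is the only one the paper uses later, in the lemma with several maximum independent sets, to conclude that $I_1^M\cap I_2^M\cap I_c$ is a critical independent set.

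The real problem is your third and fourth paragraphs. What you call the main obstacle, that $I\cup J$ is independent, is false, so neither the exchange argument nor the fallback can succeed. Take $K_2$ with vertices $a,b$. Every subset has difference $0$, so $d(G)=d_I(G)=0$, and $\{a\}$ and $\{b\}$ are both critical independent sets, yet $\{a\}\cup\{b\}$ contains the edge $ab$. So the edge $uv$ you assume for contradiction can genuinely exist. Your fallback set $X'=(I\cup J)\setminus N(I\cup J)$ is $\emptyset\neq I\cup J$ in this example. More structurally, if unions of critical independent sets were always independent, $\text{diadem}(G)$ would be the unique maximum critical independent set. That would force $\text{nucleus}(G)=\text{diadem}(G)$ and make the paper's inequality $|\text{nucleus}(G)|+|\text{diadem}(G)|\le 2\alpha(G)$ trivial. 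Drop everything after the tightness step: $I\cup J$ is critical only as a set, which is all the lemma claims.
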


\begin{lemma}\label{lem: 1 critico, varios maximos}
	Let $I_c$ be a critical independent set and let $I_1^M,\ldots, I_n^M$ be a family of maximum independent sets. 
	Let $C=I_c\cap( \bigcup  I_j^M)$, and let $A=I_c\setminus C$ 
	and $B=N(A)\cap (\bigcap I_j^M)$. Then $|B|=|A|$.
\end{lemma}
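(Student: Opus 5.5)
The plan is to reduce the statement to the single-set case of \cref{lem: ind critico ind max}, using \cref{9} to glue the pieces together. For each $j$ put $C_j=I_c\cap I_j^M$, $A_j=I_c\setminus C_j$ and $B_j=N(A_j)\cap I_j^M$. By \cref{lem: ind critico ind max}, every $C_j$ is a critical independent set and $|A_j|=|B_j|$. Since $C=\bigcup_j C_j$, repeated use of \cref{9} shows that $C$ is critical as well.

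\emph{Step 1: a counting identity.} Because $C$ and $I_c$ are both critical, $|C|-|N(C)|=|I_c|-|N(I_c)|$. Since $C\subseteq I_c$ gives $N(C)\subseteq N(I_c)$, this becomes
\[
|N(I_c)\setminus N(C)|=|I_c|-|C|=|A|.
\]
The same reasoning applied to the critical set $C_j$ gives $|N(I_c)\setminus N(C_j)|=|A_j|$ for every $j$. Hence it suffices to prove that $N(I_c)\setminus N(C)=B$.

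\emph{Step 2: $B\subseteq N(I_c)\setminus N(C)$.} We have $B\subseteq N(A)\subseteq N(I_c)$. Take $b\in B$ and $c\in C$. Then $c$ lies in some $I_j^M$, and $b$ lies in every $I_i^M$, in particular in $I_j^M$. Both vertices belong to the independent set $I_j^M$, so $bc\notin E(G)$ and $b\notin N(C)$. The same argument, with a single maximum independent set, gives $B_j\subseteq N(I_c)\setminus N(C_j)$. Both sets have cardinality $|A_j|$, so in fact
\[
N(I_c)\setminus N(C_j)=B_j\subseteq I_j^M .
\]

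\emph{Step 3: $N(I_c)\setminus N(C)\subseteq B$.} Let $v\in N(I_c)\setminus N(C)$. Since $N(C_j)\subseteq N(C)$, we get $v\in N(I_c)\setminus N(C_j)=B_j\subseteq I_j^M$ for every $j$, so $v\in\bigcap_j I_j^M$. Also $v\in N(I_c)$ and $v\notin N(C)$, so $v$ has a neighbour in $I_c\setminus C=A$, that is, $v\in N(A)$. Therefore $v\in B$.

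Combining Steps 1 to 3 gives $|B|=|N(I_c)\setminus N(C)|=|A|$. The only delicate step is the identification $N(I_c)\setminus N(C_j)=B_j$ in Step 2. It comes from comparing cardinalities: the inclusion $B_j\subseteq N(I_c)\setminus N(C_j)$ holds, and both sides have size $|A_j|$. This uses both the criticality of $C_j$ and the equality $|A_j|=|B_j|$ from \cref{lem: ind critico ind max}.
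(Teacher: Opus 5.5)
Your proof is correct, and it takes a genuinely different route from the paper's. The paper argues by induction on $n$. Write $A_i=I_c\setminus I_i^M$ and $B_i=N(I_c)\cap I_i^M$. First, criticality of the \emph{intersection} $I_c\cap\bigcap_i I_i^M$ (via \cref{9}) gives $|\bigcup_i B_i|\le|\bigcup_i A_i|$. Next, the paper claims the reverse inequality by stitching together matchings of $A_1$, $A_2\setminus A_1$, $A_3\setminus(A_1\cup A_2)$, \dots{} into successive $I_i^M$, each obtained from \cref{19}. Finally, inclusion--exclusion plus the inductive hypothesis on all proper subfamilies isolates $|\bigcap_i B_i|=|\bigcap_i A_i|$, which is the claim. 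For $n=2$ the criticality bound and the direct matching bound $|B_1\cap B_2|\le|A_1\cap A_2|$ close up under inclusion--exclusion. For general $n$ the reverse union inequality is really needed, and the paper does not address whether those separately obtained matchings are compatible.

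You instead use the \emph{union} $C=\bigcup_j C_j$, which is critical by \cref{9}, so $|N(I_c)\setminus N(C)|=|A|$ follows by pure counting. You then identify $N(I_c)\setminus N(C)$ with $B$ exactly. The pivot is your observation that $N(I_c)\setminus N(C_j)=B_j\subseteq I_j^M$, forced by the inclusion plus equal cardinalities. This turns ``not adjacent to $C$'' into ``lies in every $I_j^M$''.

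Your argument needs no induction, no inclusion--exclusion, and no matching beyond what is already packaged in \cref{lem: ind critico ind max}. It also yields the stronger structural identity $B=N(I_c)\setminus N(C)$. The paper's approach stays closer to the matching picture, but it depends on exactly the matching-assembly step that your route makes unnecessary.
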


\begin{proof}
	We prove this by induction on $n$. For the base case of the proof, see \cref{Figura312}. For each $i=1,\dots,n$
	define $B_{i}=N(I_{c})\cap I_{1}^{M}$ and $A_{i}=I_{c}-I_{1}^{M}$.
	By \cref{asdm1}, note that $\left|B_{1}\cap B_{2}\right|\le\left|A_{1}\cap A_{2}\right|.$
	By \cref{lem: ind critico ind max}, 
	\[
	\left|A_{1}\right|=\left|B_{1}\right|,\,\left|A_{2}\right|=\left|B_{2}\right|
	\]
	and $I_{1}^{M}\cap I_{c},I_{2}^{M}\cap I_{c}$ are critical sets.
	Then, by \cref{9}, $I_{1}^{M}\cap I_{2}^{M}\cap I_{c}$ is a critical
	independent set and consequently $\left|B_{1}\cup B_{2}\right|\le\left|A_{1}\cup A_{2}\right|$,
	that is,
	\[
	\left|B_{1}\right|+\left|B_{2}\right|-\left|B_{1}\cap B_{2}\right|\le\left|A_{1}\right|+\left|A_{2}\right|-\left|A_{1}\cap A_{2}\right|.
	\]
	Finally, we get $\left|B_{1}\cap B_{2}\right|=\left|A_{1}\cap A_{2}\right|.$
	
	For the inductive step, observe that a similar argument shows that 
	\[
	I_{c}\cap\bigcap_{i=1}^{n}I_{i}^{M}
	\]
	is a critical independent set and therefore 
	\[
	\left|\bigcup_{i=1}^{n}B_{i}\right|\le\left|\bigcup_{i=1}^{n}A_{i}\right|.
	\]
	For the reverse inequality, we show that there is a matching from $\bigcup_{i=1}^{n}A_{i}$ into $\bigcup_{i=1}^{n}B_{i}$.
	Consequently, by the inclusion–exclusion principle,
	\[
	\sum_{k=1}^{n}(-1)^{k+1}\left|\bigcap_{\underset{\left|I\right|=k}{i\in I\subseteq\{1,\dots,n\}}}B_{i}\right|
	=
	\sum_{k=1}^{n}(-1)^{k+1}\left|\bigcap_{\underset{\left|I\right|=k}{i\in I\subseteq\{1,\dots,n\}}}A_{i}\right|.
	\]
	By the inductive hypothesis, the above reduces to $\left|A\right|=\left|B\right|$,
	as desired. Therefore, it remains only to show that there is
	a matching from $\bigcup_{i=1}^{n}A_{i}$ into $\bigcup_{i=1}^{n}B_{i}$.
	
	By \cref{19}, there is a matching from $A_{1}$ into $B_{1}$. If
	$A_{1}=\bigcup_{i=1}^{n}A_{i}$ we are done. Otherwise, without loss of generality,
	$I_{2}^{M}$ is a maximum independent set not containing $x$. Then,
	analogously, there is a matching from $A_{2}-A_{1}$ into $I_{2}^{M}$.
	Similarly, if $y\in\bigcup_{i=1}^{n}A_{i}-\left(A_{1}\cup A_{2}\right)$,
	then $I_{3}^{M}$ is a maximum independent set not containing $y$,
	and hence there is a matching from $A_{3}-(A_{1}\cup A_{2})$ into $I_{3}^{M}$.
	Continuing in this way, we obtain the desired matching.
\end{proof}

\begin{figure}[h!]
	
	\begin{center}

		\tikzset{every picture/.style={line width=0.75pt}} 
		
		\begin{tikzpicture}[x=0.75pt,y=0.75pt,yscale=-1,xscale=1]
			
			\draw   (87.42,161.92) .. controls (98.63,136.54) and (161.68,139.79) .. (228.26,169.18) .. controls (294.84,198.58) and (339.73,242.98) .. (328.53,268.36) .. controls (317.32,293.74) and (254.27,290.49) .. (187.69,261.1) .. controls (121.11,231.7) and (76.22,187.3) .. (87.42,161.92) -- cycle ;
			\draw   (236.47,268.14) .. controls (225.54,242.64) and (270.91,198.73) .. (337.8,170.06) .. controls (404.69,141.39) and (467.78,138.82) .. (478.71,164.32) .. controls (489.64,189.82) and (444.27,233.74) .. (377.38,262.41) .. controls (310.49,291.08) and (247.4,293.65) .. (236.47,268.14) -- cycle ;
			\draw   (245.28,156.97) .. controls (245.28,107.1) and (261.41,66.67) .. (281.31,66.67) .. controls (301.2,66.67) and (317.33,107.1) .. (317.33,156.97) .. controls (317.33,206.84) and (301.2,247.27) .. (281.31,247.27) .. controls (261.41,247.27) and (245.28,206.84) .. (245.28,156.97) -- cycle ;
			\draw  [fill={rgb, 255:red, 144; green, 19; blue, 254 }  ,fill opacity=0.1 ][dash pattern={on 4.5pt off 4.5pt}] (205.33,221.67) .. controls (216.33,212.67) and (238.33,239.67) .. (253.33,246.67) .. controls (268.33,253.67) and (296.33,243.67) .. (296.33,265.67) .. controls (296.33,287.67) and (236.67,277.33) .. (212.33,259.67) .. controls (188,242) and (194.33,230.67) .. (205.33,221.67) -- cycle ;
			\draw  [dash pattern={on 4.5pt off 4.5pt}] (360.88,221.67) .. controls (349.09,212.67) and (325.51,239.67) .. (309.43,246.67) .. controls (293.35,253.67) and (263.33,243.67) .. (263.33,265.67) .. controls (263.33,287.67) and (327.29,277.33) .. (353.38,259.67) .. controls (379.46,242) and (372.67,230.67) .. (360.88,221.67) -- cycle ;
			\draw  [fill={rgb, 255:red, 144; green, 19; blue, 254 }  ,fill opacity=0.1 ] (281.31,66.67) .. controls (312.42,71.93) and (320.2,131.4) .. (316.33,181.67) .. controls (312.47,231.93) and (302.2,213.4) .. (281.33,197.67) .. controls (260.47,181.93) and (255.33,183.67) .. (247.33,178.67) .. controls (239.33,173.67) and (250.2,61.4) .. (281.31,66.67) -- cycle ;
			
			\draw (84,124.4) node [anchor=north west][inner sep=0.75pt]  [font=\large]  {$I_{1}^{M}$};
			\draw (462,127.4) node [anchor=north west][inner sep=0.75pt]  [font=\large]  {$I_{2}^{M}$};
			\draw (276,44.4) node [anchor=north west][inner sep=0.75pt]  [font=\large]  {$I_{c}$};
			\draw (210,233.4) node [anchor=north west][inner sep=0.75pt]    {$B_{1}$};
			\draw (339,233.4) node [anchor=north west][inner sep=0.75pt]    {$B_{2}$};
			\draw (279,137.4) node [anchor=north west][inner sep=0.75pt]    {$A_{1}$};

		\end{tikzpicture}

	\end{center}
	\caption{Illustration of the proof of \cref{mainlema2}.}
	\label{Figura312}
\end{figure}
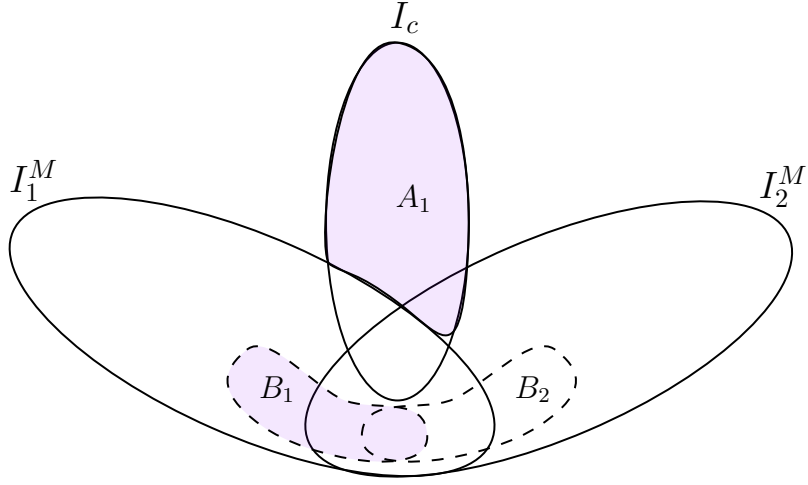

\begin{theorem}[\cite{butenko2007using}\label{asdnjh1}]
	If $I_{c}$ is a critical independent set in a graph $G$ then there
	is a maximum independent set $I$ in $G$ such that $I_{c}\s I$. 
\end{theorem}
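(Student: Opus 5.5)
The plan is to reduce to the classical Hall-type argument. Given $I_c$ critical, the cleanest route is through \cref{cor: ind - B + A es ind}: we need some maximum independent set $I_M$, put $C=I_c\cap I_M$, $A=I_c\setminus C$, $B=N(A)\cap I_M$, and conclude that $(I_M\cup A)\setminus B$ is maximum and contains $I_c$. However, since that corollary rests on \cref{lem: ind critico ind max}, whose proof cites \cref{asdm1} and \cref{19}, I would instead give a self-contained argument to avoid any circularity.

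\textbf{Step 1 (matching $N(I_c)$ into $I_c$).} Since $I_c$ is critical, every $Y\subseteq N(I_c)$ satisfies $|N(Y)\cap I_c|\ge|Y|$. Otherwise the set $I_c\setminus N(Y)$ would have
\[
d_G\bigl(I_c\setminus N(Y)\bigr)\ \ge\ \bigl(|I_c|-|N(Y)\cap I_c|\bigr)-\bigl(|N(I_c)|-|Y|\bigr)\ >\ d_G(I_c),
\]
because $N(I_c\setminus N(Y))\subseteq N(I_c)\setminus Y$. This contradicts criticality. Hall's theorem then yields a matching $M_0$ of $N(I_c)$ into $I_c$.

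\textbf{Step 2 (building the maximum independent set).} Let $H=G-(I_c\cup N(I_c))$, and take a maximum independent set $J$ of $H$. Put $I=I_c\cup J$. This set is independent, since $J$ avoids $N(I_c)$. To show $|I|=\alpha(G)$, let $S$ be any independent set of $G$ and split it as
\[
S=(S\cap I_c)\cup(S\cap N(I_c))\cup(S\cap V(H)).
\]
We have $|S\cap V(H)|\le|J|$. The vertices of $S\cap N(I_c)$ are injected by $M_0$ into $I_c$, landing on vertices adjacent to them. Those vertices are therefore not in $S$, so the images are disjoint from $S\cap I_c$. Hence
\[
|S\cap I_c|+|S\cap N(I_c)|\le|I_c|,
\]
and so $|S|\le|I_c|+|J|=|I|$. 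Thus $I$ is a maximum independent set containing $I_c$.

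\textbf{Main obstacle.} The only delicate point is Step 1, namely deriving Hall's condition from criticality by choosing the right competitor set $I_c\setminus N(Y)$. The rest is a direct counting argument.
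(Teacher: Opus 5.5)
Your proof is correct. The competitor set $I_c\setminus N(Y)$ gives Hall's condition exactly as you claim. In Step 2 the $M_0$-images of $S\cap N(I_c)$ do lie in $I_c\setminus S$, which yields the bound.

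The paper does not prove this statement; it only cites Butenko and Trukhanov. It does, however, contain an argument that implies it, namely \cref{cor: ind - B + A es ind}. There one starts from an arbitrary maximum independent set $I_M$ and exchanges $B=N(A)\cap I_M$ for $A=I_c\setminus I_M$. The inequality $|A|\le|B|$ comes from Berge's characterization (\cref{19}), and $|B|\le|A|$ comes from the matching of \cref{asdm1}. Your worry about circularity there is unfounded. That corollary rests only on \cref{19} and \cref{asdm1}, neither of which uses the present theorem, so citing it would have been a valid one-line proof.

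The two routes buy different things. Yours builds the maximum independent set directly as $I_c\cup J$ and needs nothing beyond Hall's theorem. It also proves the stronger reduction identity $\alpha(G)=|I_c|+\alpha\bigl(G-(I_c\cup N(I_c))\bigr)$: every maximum independent set of the residual graph extends $I_c$. The exchange version instead shows that any \emph{prescribed} maximum independent set can be modified to contain $I_c$ by swapping equal-sized sets. That is what the paper needs when it forms $I_j^M=(I_1^M\cup I_j)\setminus(N(I_j)\cap I_1^M)$ from a fixed $I_1^M$ in \cref{mainlema2}.

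Your Step 2 recovers the exchange as well. Take $S=I_M$ and note that $I_M\cap N(I_c)=B$, because $I_c\cap I_M$ has no neighbours in $I_M$. Step 2 then gives $|B|\le|A|$. Hence $(I_M\setminus B)\cup A$ is independent of size at least $|I_M|$, so it is maximum and $|A|=|B|$. In particular, Berge's theorem is dispensable in \cref{lem: ind critico ind max}.
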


\begin{theorem}\label{mainlema2}
	For every graph $G$, 
	\[
	\left|\textnormal{nucleus}(G)\right|+\left|\textnormal{diadem}(G)\right|\le2\alpha(G).
	\]
\end{theorem}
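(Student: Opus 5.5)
The plan is to fix one maximum critical independent set $J$, extend it to a maximum independent set, and charge every vertex of $\textnormal{diadem}(G)$ lying outside that set to a distinct vertex of the set outside $\textnormal{nucleus}(G)$.

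Fix a maximum critical independent set $J$, so $\textnormal{nucleus}(G)\subseteq J$. By \cref{asdnjh1} choose $I\in\Omega(G)$ with $J\subseteq I$. Write
\[
\textnormal{diadem}(G)=\bigl(\textnormal{diadem}(G)\cap I\bigr)\cup D,\qquad D=\textnormal{diadem}(G)-I .
\]
Since $|\textnormal{diadem}(G)\cap I|\le |I|=\alpha(G)$, it suffices to prove
\[
|D|\le |I-\textnormal{nucleus}(G)|=\alpha(G)-|\textnormal{nucleus}(G)|.
\]
I will do this by constructing a matching from $D$ into $I-\textnormal{nucleus}(G)$.

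\textbf{Step 1: each critical set is matched into $I$.} For a critical independent set $I_c$, apply \cref{lem: ind critico ind max} with $I_M=I$. Put $A=I_c-I$ and $B=N(A)\cap I$. Then $|A|=|B|$, and by \cref{19} (the proof of \cref{lem: ind critico ind max}) there is a perfect matching between $A$ and $B$.

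\textbf{Step 2: $B$ avoids the nucleus.} I first show that every critical independent set $I_c$ lies in some maximum critical independent set. The attempt is to form $(J\cup A)-B'$, where $B'=N(A)\cap J$. Lemma~\ref{9} makes $J\cup I_c$ critical, and \cref{lem: ind critico ind max} makes $I_c\cap J$ critical. The same counting as in the proof of \cref{lem: ind critico ind max} should then give $|N(A)\cap J|\le|A|$. I then want to show $(J\cup A)-B'$ is a critical independent set of size $\ge|J|$ containing $I_c$. By maximality of $J$ it is a maximum critical independent set $J'$, so $\textnormal{nucleus}(G)\subseteq J'$. Since $J'$ is independent and contains $A$, no vertex of $A$ is adjacent to $\textnormal{nucleus}(G)$. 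Hence $B\subseteq I-\textnormal{nucleus}(G)$.

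\textbf{Step 3: glue the matchings.} $D$ is the union of the sets $A(I_c)$ over all critical independent sets $I_c$, so I need a single matching from $D$ into $I-\textnormal{nucleus}(G)$. I would copy the greedy construction at the end of the proof of \cref{lem: 1 critico, varios maximos}. Enumerate critical sets $I_c^1,I_c^2,\dots$ covering $D$. Lemma~\ref{9} makes unions of critical independent sets critical, so each new batch $A(I_c^{j})-\bigcup_{i<j}A(I_c^i)$ can be handled inside a single critical set. If that union is not independent, pass to the corresponding maximum critical set from Step 2. One then matches the new batch into $N(\cdot)\cap I$ avoiding vertices already used, via Hall's condition. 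Hall's condition is checked by $d_G(U)\le d_I(G)$ applied to critical sets of the form $(\text{critical})-X\cup N(X)\cap I$.

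The main obstacle is Step 2 together with the independence issue in Step 3. Lemma~\ref{9} gives criticality of unions but not independence, so both steps lean on the "exchange" showing $(J\cup A)-B'$ stays critical and maximum. I would try to prove that first, by the same difference computation used in \cref{lem: ind critico ind max}. Once every critical independent set is known to extend to a maximum critical independent set disjoint from $N(\textnormal{nucleus}(G))$, the matching and counting in Steps 1 and 3 are routine Hall arguments.
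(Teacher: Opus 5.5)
Your reduction to a matching from $D=\textnormal{diadem}(G)-I$ into $I-\textnormal{nucleus}(G)$ is sound, and Step~1 is fine. The real gap is Step~3, which is where the whole difficulty lies and which you leave unproved. Steps~1--2 give, for each critical $I_c$ separately, a matching of $I_c-I$ into $I$. They say nothing about a simultaneous matching.

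In general $D$ is not independent: in the path $abcd$ with $J=I=\{a,d\}$ one gets $D=\{b,c\}$, an edge. So \cref{19} does not apply to subsets of $D$. The greedy batch-by-batch procedure you borrow from Lemma~\ref{lem: 1 critico, varios maximos} can only succeed if the global Hall condition $|N(Z)\cap I|\ge|Z|$ holds for every $Z\subseteq D$, and that is exactly what has to be proved. Passing to ``the corresponding maximum critical set'' does not help, since a maximum critical set containing one $I_c$ cannot contain a non-independent union of several. The Hall check via sets ``$(\text{critical})-X\cup N(X)\cap I$'' is not an argument. The paper never needs a simultaneous matching: it runs through a chain of maximum independent sets $I_j^M$ and charges $\mathcal{A}_j$ to $\mathcal{B}_j\subseteq\mathcal{L}_j-\mathcal{L}_{j+1}$, which are pairwise disjoint by construction.

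Your Step~2 also misfires as written. With $A=I_c-I$, the set $(J\cup A)-B'$ contains $I_c$ only if $I_c\cap I\subseteq J$, which you have not shown. Moreover, the Berge-type counting of \cref{lem: ind critico ind max} gives only $|B'|\le|A|$, not criticality, since $J$ is not a maximum independent set.

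The missing idea repairs Step~2 and makes Step~3 unnecessary: exchange against $J$ instead. Put $A=I_c-J$, $B=N(A)\cap J$ and $K=(J\cup I_c)-B=(J-B)\cup A$.
\begin{itemize}
\item $K$ is independent and contains $I_c$.
\item The vertices of $A\cap N(B)$ lie in $N(J\cup I_c)$ but, being in the independent set $K$, not in $N(K)$. Hence $d_G(K)\ge d_G(J\cup I_c)-|B|+|A\cap N(B)|$.
\item $d_G(J\cup I_c)=d(G)$ by \cref{9}.
\item $|A\cap N(B)|=|N(B)\cap I_c|\ge|B|$ by Hall's condition for the critical set $I_c$, which is the argument behind \cref{asdm1}.
\end{itemize}
So $K$ is critical with $|K|\ge|J|$. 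Maximality of $J$ then forces $|A|=|B|=|A\cap N(B)|$, i.e.\ every vertex of $I_c-J$ has a neighbour in $J$.

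Two consequences follow. Every critical independent set lies in a maximum critical one, so no vertex of $\textnormal{diadem}(G)$ is adjacent to $\textnormal{nucleus}(G)$. Also $\textnormal{diadem}(G)-J\subseteq N(J)$, so $D=\textnormal{diadem}(G)-J$. Now one matching suffices: criticality of $J$ gives a matching of all of $N(J)$ into $J$, and it sends $D$ into $J-\textnormal{nucleus}(G)$. This yields $|\textnormal{diadem}(G)|\le 2|J|-|\textnormal{nucleus}(G)|\le 2\alpha(G)-|\textnormal{nucleus}(G)|$.
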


\begin{proof}
	Let $I_1,I_2,\ldots,I_m$ the family of critical maximum independent sets of $G$. By \cref{asdnjh1}, there exists a maximum independent set $I_1^M$ containing $I_1$, and for $j\geq 2$ let 
	$I_j^M=(I_1^M\cup I_j)\setminus (N(I_j)\cap I_1^M)$. By Lemma \ref{lem: ind critico ind max},
	$I_j^M$ is a maximum independent set containing $I_j$.
	
	For $2 \leq j \leq m$ we let 
	\begin{align*}
		\mathcal{U}_j=&\bigcup_{i=1}^{j-1} I^M_i\\
		\mathcal{L}_j=&\bigcap_{i=1}^{j-1} I^M_i\\
		\mathcal{A}_j=&I_j\setminus \mathcal{U}_{j}\\
		\mathcal{B}_j=&N(\mathcal{A}_j)\cap \mathcal{L}_j.
	\end{align*} 
	By Lemma \ref{lem: 1 critico, varios maximos}, we get that $|\mathcal{B}_j|=|\mathcal{A}_j|$.
	Further, since $\mathcal{B}_j\cap I_j=\emptyset$, we get $\mathcal{B}_j\subset \mathcal{L}_j\setminus \mathcal{L}_{j+1}$. 
	This implies that $\mathcal{B}_j\cap \mathcal{B}_\ell =\emptyset$ if $1\leq j<\ell\leq m$. Further, notice that $\textnormal{nucleus} (G)\s I_j$ implies $\mathcal{B}_j\cap \textnormal{nucleus}(G)=\emptyset$. Therefore $\bigcup_{i=2}^{m}\mathcal{B}_i\subset \mathcal{L}_2\setminus \textnormal{nucleus}(G)$. But $\mathcal{L}_2=I^M_1$.
	
	Notice that 
	\[
	\operatorname{diadem}(G)=\bigcup_{j=1}^m I_j \subseteq I_1^M\cup \bigcup_{j=2}^m \mathcal{A}_j.
	\]
	Thus,
	\begin{align*}
		|\operatorname{diadem}(G)|=&\left|\bigcup_{j=1}^m I_j\right|\\
		\leq &|I_1^M|+ \sum_{j=2}^m |\mathcal{A}_j|\\
		=&|I_1^M|+ \sum_{j=2}^m |\mathcal{B}_j|\\
		\leq &|I_1^M|+ |I_1^M|-|\textnormal{nucleus}(G)|\\
	\end{align*}
	Therefore,
	\[
	|\operatorname{diadem}(G)|\leq 2\alpha (G) - |\textnormal{nucleus}(G)|.
	\]
	As desired.
\end{proof}

Thus, since $\text{ker}(G)\s\text{nucleus}(G)$, \cref{asdasd1} follows from \cref{mainlema2}.

\begin{theorem}[\cite{levit2014set}\label{anterior}]
	For every graph $G$, 
	\[
	\left|\textnormal{corona}(G)\right|+\left|\textnormal{core}(G)\right|\ge2\alpha(G).
	\]
\end{theorem}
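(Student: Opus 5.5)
We prove the inequality $\left|\textnormal{corona}(G)\right|+\left|\textnormal{core}(G)\right|\ge2\alpha(G)$ directly from Berge's characterization (\cref{19}), by induction on the number of maximum independent sets used to build the corona. Enumerate $\Omega(G)=\{S_1,\dots,S_r\}$. For $j\ge1$ set
\[
U_j=\bigcup_{i=1}^{j}S_i,\qquad L_j=\bigcap_{i=1}^{j}S_i .
\]
We claim that $|U_j|+|L_j|\ge 2\alpha(G)$ for every $j$. Taking $j=r$ gives $U_r=\textnormal{corona}(G)$ and $L_r=\textnormal{core}(G)$, which is the statement.

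The base case $j=1$ is trivial, since $|U_1|+|L_1|=2\alpha(G)$. For the inductive step, we have
\[
|U_{j+1}|=|U_j|+|S_{j+1}\setminus U_j|
\quad\text{and}\quad
|L_{j+1}|=|L_j|-|L_j\setminus S_{j+1}|.
\]
So it suffices to show
\[
|S_{j+1}\setminus U_j|\ \ge\ |L_j\setminus S_{j+1}|.
\]
Put $D=L_j\setminus S_{j+1}$. This set is independent, since it is contained in $S_1$, and it is disjoint from the maximum independent set $S_{j+1}$.

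By \cref{19}, $D$ can be matched into $S_{j+1}$ by some matching $M$. It remains to check that $M(D)\subseteq S_{j+1}\setminus U_j$. Take $d\in D$ and let $u=M(d)\in S_{j+1}$, so $u$ is adjacent to $d$. Since $d\in L_j$, $d$ lies in every $S_i$ with $i\le j$. Because each $S_i$ is independent, $u\notin S_i$ for all $i\le j$, that is, $u\notin U_j$. Hence $M$ is an injection of $D$ into $S_{j+1}\setminus U_j$, which gives the required inequality and closes the induction.

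The only delicate point is choosing the right pair of sets to compare. The loss in the intersection, $L_j\setminus S_{j+1}$, must be matched into the gain in the union, $S_{j+1}\setminus U_j$. Independence of each earlier $S_i$ together with $d\in L_j$ forces every neighbor of $d$ outside $U_j$, and this is what makes Berge's matching land in the new part of the union. Everything else is bookkeeping.
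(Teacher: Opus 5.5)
Your proof is correct. $D=L_j\setminus S_{j+1}$ is independent and disjoint from $S_{j+1}$, so \cref{19} matches it into $S_{j+1}$. Since each $d\in D$ lies in all of $S_1,\dots,S_j$, its partner avoids $U_j$, so $|U_j|+|L_j|$ is nondecreasing in $j$.

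The paper gives no proof of its own here; the statement is quoted from \cite{levit2014set}. As I recall that source, it derives the inequality from a general ``set and collection'' lemma: for any independent set $A$ and any nonempty $\Gamma\subseteq\Omega(G)$ there is a matching from $A\setminus\bigcap\Gamma$ into $\bigcup\Gamma\setminus A$, applied with $A\in\Omega(G)$ and $\Gamma=\Omega(G)$.

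Your one-set-at-a-time induction is a self-contained substitute that needs only \cref{19}. In fact it needs only the Hall inequality for $D$ itself: $(S_{j+1}\setminus N(D))\cup D$ is independent, so $|D|\le|N(D)\cap S_{j+1}|\le|S_{j+1}\setminus U_j|$. It is also not weaker than it looks:
\begin{itemize}
\item the sets $L_j\setminus S_{j+1}$ are pairwise disjoint with union $S_1\setminus\textnormal{core}(G)$;
\item the sets $S_{j+1}\setminus U_j$ are pairwise disjoint with union $\textnormal{corona}(G)\setminus S_1$;
\item all sources lie in $S_1$, while all targets avoid it.
\end{itemize}
So your stepwise injections glue into a single matching from $S_1\setminus\textnormal{core}(G)$ into $\textnormal{corona}(G)\setminus S_1$. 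Running the same induction from an arbitrary independent set $A$ in place of $S_1$ (start with $U_0=L_0=A$) would recover the general lemma. The lemma-based route buys one reusable statement; your argument shows that it costs nothing beyond Berge's theorem.

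Your scaffolding is also the one the paper itself uses for the companion upper bound in \cref{mainlema2}: a running union and running intersection, with one maximum independent set added at a time. The comparison there runs the other way. The part $\mathcal{A}_j$ of $I_j$ outside the running union is charged to an equal-size set $\mathcal{B}_j$ leaving the running intersection, which gives an upper bound. Here, what leaves the intersection is injected into the new part of the union, which gives a lower bound.
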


Hence, from \cref{anterior}, \cref{mainlema1}, and \cref{mainlema2} we obtain the following
result.

\begin{theorem}\label{mainthm}
	For every graph $G$, 
	\begin{eqnarray*}
		\left|\textnormal{nucleus}(G)\right|+\left|\textnormal{diadem}(G)\right| & \le & 2\alpha(G)\\
		& \le & \left|\textnormal{corona}(G)\right|+\left|\textnormal{core}(G)\right|\\
		& \le & 2\alpha(G)+k.
	\end{eqnarray*}
	\noindent where $k$ is the maximum cardinality of a set of
	vertex-distinct odd cycles of $G$. 
\end{theorem}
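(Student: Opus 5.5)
The statement in \cref{mainthm} is a concatenation of three inequalities, each already established or cited above. The plan is therefore to verify the chain link by link and check that the quantities match, especially the meaning of $k$.

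\textbf{First link.} $\left|\textnormal{nucleus}(G)\right|+\left|\textnormal{diadem}(G)\right|\le 2\alpha(G)$ is exactly \cref{mainlema2}, so I would invoke it directly. It is worth recalling why it applies to every graph. \cref{mainlema2} needs only \cref{asdnjh1}, to embed a maximum critical independent set in a maximum independent set $I_1^M$, and \cref{lem: 1 critico, varios maximos}, to compare $|\mathcal{A}_j|$ with $|\mathcal{B}_j|$. Neither result carries hypotheses on $G$. If there were any doubt, this is the step I would re-examine: the key point is that the sets $\mathcal{B}_j$ are pairwise disjoint and avoid $\textnormal{nucleus}(G)$ inside $I_1^M$.

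\textbf{Second link.} $2\alpha(G)\le\left|\textnormal{corona}(G)\right|+\left|\textnormal{core}(G)\right|$ is \cref{anterior}, quoted from \cite{levit2014set}. I would cite it as is. For a quick sanity check, take $S_1,S_2\in\Omega(G)$. Then $|S_1|+|S_2|=|S_1\cup S_2|+|S_1\cap S_2|$. Choosing these to realize core and corona simultaneously is not possible in general, which is why the cited result is needed rather than a one-line argument.

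\textbf{Third link.} $\left|\textnormal{corona}(G)\right|+\left|\textnormal{core}(G)\right|\le 2\alpha(G)+k$ is \cref{mainlema1}, with $k$ the maximum cardinality of a set of vertex-distinct odd cycles. This is the same $k$ used in the statement of \cref{mainthm}, so no adjustment is needed. Composing the three inequalities gives the chain.

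The only real obstacle is bookkeeping. One must ensure that the $k$ in \cref{mainlema1} is the vertex-distinct count and not the total number of odd cycles, since the latter weaker form (the Corollary) would still suffice but would not match the stated theorem. One must also confirm that \cref{mainlema2} uses nucleus, the intersection of maximum critical independent sets, rather than ker. Both match, so the proof is a direct citation of \cref{mainlema2}, \cref{anterior}, and \cref{mainlema1} in that order.
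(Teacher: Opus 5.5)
Your proposal is correct and matches the paper's proof, which likewise obtains \cref{mainthm} by concatenating \cref{mainlema2}, \cref{anterior}, and \cref{mainlema1}, with the same vertex-distinct $k$. One small slip in your closing remark: the Corollary's version (with $k$ the total number of odd cycles) would \emph{not} suffice for the stated chain, because that number is at least the vertex-distinct count and so gives a weaker upper bound, though your proof never relies on it.
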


It is possible to find graphs that satisfy the inequalities in \cref{mainthm}
with strict inequalities. In a complete graph of odd order $n\ge5$,
the empty set is the only critical independent set and $\left|\text{nucleus}(G)\right|+\left|\text{diadem}(G)\right|=0$.
However, $2\alpha(G)=2\left\lfloor \frac{n}{2}\right\rfloor =n-1<n$,
while $\left|\text{corona}(G)\right|+\left|\text{core}(G)\right|=n$,
$2\alpha(G)+k=n-1+k$ and $k\ge2$.

\section{Open Problems}\label{sss5}

It is an easy exercise to verify that bipartite graphs satisfy
all the inequalities in \cref{mainthm} with equality. This motivates the following.

\begin{problem}
	Characterize the graphs for which $\left|\textnormal{nucleus}(G)\right|+\left|\textnormal{diadem}(G)\right|=2\alpha(G)+k$,
	where $k$ is the maximum cardinality of a set of vertex-distinct
	odd cycles of $G$. 
\end{problem}

\begin{problem}
	Characterize the graphs for which $\left|\textnormal{corona}(G)\right|+\left|\textnormal{core}(G)\right|=2\alpha(G)+k$,
	where $k$ is the maximum cardinality of a set of vertex-distinct
	odd cycles of $G$. 
\end{problem}

\begin{problem}
	Is it possible to decide whether a graph satisfies $\left|\textnormal{nucleus}(G)\right|+\left|\textnormal{diadem}(G)\right|=2\alpha(G)+k$
	in polynomial time?
\end{problem} 

\begin{problem}
	Find a lower bound for $\left|\textnormal{nucleus}(G)\right|+\left|\textnormal{diadem}(G)\right|$
	in terms of $k$. 
\end{problem}

\begin{problem}
	Characterize the graphs that satisfy all the
	inequalities in \cref{mainthm} with strict inequality.
\end{problem}

\begin{problem}
	Given a graph of odd order $n\ge5$, find
	a lower bound for the number of edges such that the graph satisfies
	all the inequalities in \cref{mainthm} with strict inequality.
\end{problem}

\section*{Acknowledgments}

This work was partially supported by Universidad Nacional de San Luis, grants PROICO 03-0723 and PROIPRO 03-2923, MATH AmSud, grant 22-MATH-02, Consejo Nacional de Investigaciones
Cient\'ificas y T\'ecnicas grant PIP 11220220100068CO and Agencia I+D+I grants PICT 2020-00549 and PICT 2020-04064.

\section*{Declaration of generative AI and AI-assisted technologies in the writing process}
During the preparation of this work the authors used ChatGPT-3.5 in order to improve the grammar of several paragraphs of the text. After using this service, the authors reviewed and edited the content as needed and take full responsibility for the content of the publication.

\section*{Data availability}

Data sharing not applicable to this article as no datasets were generated or analyzed during the current study.

\section*{Declarations}

\noindent\textbf{Conflict of interest} \ The authors declare that they have no conflict of interest.

\bibliographystyle{apalike}

\bibliography{TAGcitasV2025}

\end{document}